\pgfplotsset{compat=1.15}
\algrenewcommand{\algorithmiccomment}[1]{$\triangleright$ \emph{#1}}
\newcommand{\diam}[1]{{\operatorname{diam}\left({#1}\right)}}
\newcommand{\dist}[2]{\operatorname{dist}\left({#1},{#2}\right)}
\DeclareMathOperator{\dir}{dir}
\DeclareMathOperator{\child}{child}
\DeclareMathOperator{\parent}{parent}
\DeclareMathOperator{\level}{level}
\DeclareMathOperator{\diag}{diag}
\newcommand{\new}[1]{#1}
\begin{document}
\title{Complexity Analysis of a Fast Directional Matrix-Vector Multiplication} 
\author{G\"unther Of\orcidID{0000-0003-2258-7001} \and Raphael Watschinger\orcidID{0000-0002-7750-8561} }
\authorrunning{G.~Of and R.~Watschinger}
\institute{Graz University of Technology, Institute of Applied Mathematics,
	Steyrergasse 30, 8010 Graz, Austria, \email{of@tugraz.at}, \email{watschinger@math.tugraz.at}}
\maketitle              
\begin{abstract}
  We consider a fast, data-sparse directional method to realize matrix-vector products related to point evaluations of the Helmholtz kernel. The method is based on a hierarchical partitioning of the point sets and the matrix. The considered directional multi-level approximation of the Helmholtz kernel can be applied even on high-frequency levels efficiently. We provide a detailed analysis of the almost linear asymptotic complexity of the presented method. Our numerical experiments are in good agreement with the provided theory.
\keywords{Helmholtz \and Fast multipole method \and Hierarchical matrix.}
\end{abstract}
\section{Introduction}
In this paper we consider an efficient method for the computation of the matrix-vector product for a fully populated matrix $A \in \mathbb{C}^{N_T \times N_S}$ with entries
\begin{align} 
	A[j,k] &= f(x_j, y_k), \label{ow:eq_helmholtz_matrix} \\
	f(x,y) &= \frac{\exp (i \kappa |x-y|)}{4 \pi |x-y|} \nonumber
\end{align}
where $f$ is the Helmholtz kernel, $\kappa > 0$ the wave number and $P_T=\{x_j\}_{j=1}^{N_T}$ and $P_S=\{y_k\}_{k=1}^{N_S}$ are two sets of points  in $\mathbb{R}^3$. Similar matrices arise in the solution of boundary value problems for the Helmholtz equation by boundary element methods. Using standard matrix-vector multiplication is prohibitive for large $N_T$ and~$N_S$ due to the asymptotic runtime and storage complexity $\mathcal{O}(N_T N_S)$. 

Due to the oscillating behavior of the Helmholtz kernel, existing standard fast methods for the reduction of the complexity do not perform well for relatively large wave numbers $\kappa$. Therefore, a variety of methods have been developed. There are several versions of the  fast multipole method (FMM) based on different expansions of the Helmholtz kernel $f$. A first version suitable for high frequency regimes is given in \cite{ow:rokhlin2} and an overview of the early developments can be found in \cite{ow:nishimura}. Of further interest are the methods in \cite{ow:darve,ow:hu}, which rely on plane wave expansions, and the wideband method in \cite{ow:cheng} which switches between different expansions in low and high frequency regimes.

Directional methods allow to overcome the deficiencies of standard schemes in high frequency regimes, too. The basic idea of these methods is that the Helmholtz kernel $f$ can locally be smoothed by a plane wave. In the context of fast methods this idea was first considered in \cite{ow:brandt} and later in \cite{ow:engquist}. In \cite{ow:messnerschanz} the idea is picked up and combined with an approximation of the kernel via interpolation. \cite{ow:boerm2,ow:boerm1} follow a similar path in the context of $\mathcal{H}^2$-matrices providing a rigorous analysis. A slightly different method is proposed in \cite{ow:bebendorf3}, where the directional smoothing is combined with a nested cross approximation of the kernel.

In this paper we present a directional method in the spirit of \cite{ow:messnerschanz} based on a uniform clustering of the point sets. We choose this approach due to the applicability of the involved interpolation to other kernels and a smooth transition between low and high frequency regimes in contrast to the wideband FMM in~\cite{ow:cheng}. We give a description of the method in Sect.~\ref{ow:sec_derivation} and an asymptotic complexity analysis in Sect.~\ref{ow:sec_complexity_analysis}. While \cite{ow:messnerschanz} provides already a brief analysis we present a detailed one not unlike the one in \cite{ow:boerm2}, but focusing on points distributed in 3D volumes instead of points on 2D manifolds and allowing two distinct sets of points. In addition, we exploit the uniformity for a significant storage reduction compared to non-uniform approaches. This reduction and the claimed almost linear asymptotic behavior can be observed in our numerical tests in Sect.~\ref{ow:sec:num:examples}.

\section{Derivation of the Fast Directional Method} \label{ow:sec_derivation}
In this section we present a method for fast matrix-vector multiplications for the matrix $A$ in \eqref{ow:eq_helmholtz_matrix} based on a hierarchical partitioning of the sets of points into boxes and a directional multi-level approximation of the Helmholtz kernel~$f$ on suitable pairs of such boxes.

\subsection{Box Cluster Trees} \label{ow:sec_bct}
The desired matrix partition can efficiently be constructed from a hierarchical tree clustering of the point sets into axis-parallel boxes. In what follows we define uniform box cluster trees which are constructed by a uniform subdivision of an initial box, see, e.g., \cite{ow:greengardrokhlin}.
In particular, we construct a uniform box cluster tree~$\mathcal{T}_T$ for a given set of points $P_T = \{x_j\}_{j=1}^{N_T}$ in an axis-parallel box ${T = (a_1,b_1] \times \ldots \times (a_3,b_3] \subset \mathbb{R}^3}$ by Algorithm~\ref{ow:alg_box_ct}. As additional parameter we have the maximal number of points per leaf~$n_{\max}$.
\begin{algorithm}
\caption{Construction of a uniform box cluster tree~$\mathcal{T}_T$ \label{ow:alg_box_ct}}
\begin{algorithmic}[1] 
\State \textbf{input}: Points $P_T = \{x_j\}_{j=1}^{N_T}$ inside a box $T=(a_1,b_1]\times \ldots \times (a_3,b_3]$, maximal number $n_{\max}$ of points per leaf.
\State Construct an empty tree $\mathcal{T}_T$ and add $T$ as its root.
\State Call \Call{RefineCluster}{$T$, $\mathcal{T}_T$}
\Statex
\Function{RefineCluster}{$T=(a_1,b_1]\times \ldots \times (a_3,b_3]$, $\mathcal{T}$}
      \If{$\#\{x_j: x_j \in T\} > n_{\max}$}
      \State Compute center $c_1 = (a_1+b_1)/2$, $c_2 = (a_2+b_2)/2$, $c_3 = (a_3+b_3)/2$.
      \State Uniformly subdivide $T$  into 8 boxes $T_1 = (a_1,c_1]\times \ldots \times (a_3,c_3]$, \ldots, 
      \State $T_8 = (c_1,b_1]\times \ldots \times (c_3,b_3]$.
  		\For{$k=1$, \ldots, $8$}
  			\If{$\#\{x_j \colon x_j \in T_k\} \geq 1$}
  				\State Add $T_k$ to $\mathcal{T}$ as child of $T$.
  				\State Call \Call{RefineCluster}{$T_k$, $\mathcal{T}$}.
  			\EndIf
  		\EndFor
  \EndIf
\EndFunction
\end{algorithmic}
\end{algorithm}
We use standard notions of levels and leaves in trees known from graph theory. In addition we define
\begin{itemize}
\item the \emph{index set} $\hat{t} := \{j \in \{1, \ldots, N_T\} \colon x_j \in t\}$ for a box $t \in \mathcal{T}_T$,
\item the \emph{level sets of the tree} by $\mathcal{T}_T^{(\ell)} \new{:=} \{t \in \mathcal{T}_T \colon \level(t) = \ell\}$,
\item the \emph{depth} $p(\mathcal{T}_T) \new{:=} \max\{\level(t): t \in \mathcal{T}_T\}$ of the cluster tree $\mathcal{T}_T$,
\item the \emph{set} $\mathcal{L}_T$ \emph{of all leaves} of $\mathcal{T}_T$.
\end{itemize}
In general, Alg.~\ref{ow:alg_box_ct} creates an adaptive, i.e.~unbalanced cluster tree depending on the point distribution. Other construction principles for box cluster trees such as bisection \cite[Sect.~3.1.1]{ow:rjasanow} tailor the tree to the point sets yielding more balanced trees. However, the boxes at a given level $\ell$ of such a tree can vary strongly in shape, while the ones of a uniform box cluster tree are identical up to translation. We will exploit this uniformity to avoid recomputations and to reduce the storage costs of the presented method.

\subsection{A Directional Kernel Approximation} \label{ow:sec_dir_approx}

In this section we describe a method to approximate the Helmholtz kernel $f$ on a suitable pair of boxes~$t$ and~$s$ by a separable expansion, which will allow for low rank approximations of suitable subblocks of the matrix~$A$ in \eqref{ow:eq_helmholtz_matrix}. Due to the oscillatory part $\exp(i \kappa |x-y|)$ of $f$, standard approaches like tensor interpolation of the kernel are not effective for relatively large $\kappa$ as pointed out in \cite{ow:bebendorf3,ow:messnerschanz}. Therefore, we consider a directional approach which first appeared in \cite{ow:brandt} and~\cite{ow:engquist} and was later used in \cite{ow:boerm1} and~\cite{ow:messnerschanz} among others. The basic idea is that the oscillatory part $\exp(i\kappa |x-y|)$ of~$f$ can be smoothened by a plane wave term $\exp(-i\kappa \langle x-y, c\rangle)$ in a cone around a direction $c \in \mathbb{R}^3$ with $|c|=1$. We can rewrite the Helmholtz kernel~$f$ by expanding the numerator and the denominator by a plane wave term yielding
\begin{align} 
	f(x,y) &= f_c(x,y) \exp(i\kappa \langle x, c \rangle) \exp(-i\kappa \langle y, c \rangle), \label{ow:eq_dir_rep_helmholtza} \\ \label{ow:eq_dir_rep_helmholtz}
	f_c(x,y) &:= f(x,y) \exp(-i\kappa \langle x-y, c \rangle) = 
	\frac{\exp(i\kappa (|x-y| - \langle x-y,c \rangle))}{4\pi |x-y|}.
\end{align}
The modified kernel function $f_c$ is somewhat smoother than $f$ on suitable boxes~$t$ and~$s$. In fact, if two points $x \in t$ and $y \in s$ satisfy $(x-y)/|x-y| \approx c$, then $f_c(x,y) \approx (4 \pi |x-y|)^{-1}$, i.e.~the oscillations of $f$ are locally damped in $f_c$.
Therefore tensor interpolation can be applied to approximate $f_c$ instead of $f$ on suitable axis-parallel boxes $t$ and $s$ and we get
\begin{equation} \label{ow:eq_approx_f_c}
	f_c(x,y) \approx \sum\limits_{\nu \in M}\sum\limits_{\mu \in M} 
	  f_c(\xi_{t, \nu}, \xi_{s, \mu}) L^{(m)}_{t, \nu}(x) L^{(m)}_{s, \mu}(y),
\end{equation}
where $\nu$ and $\mu$ are multi-indices in the set $M = \{1, \ldots, m+1\}^3$, $\xi_{t, \nu}$ are tensor products of 1D Chebyshev nodes of order $m+1$ transformed to the box ${t = (a_1, b_1] \times \ldots \times (a_3, b_3]}$, i.e. $\xi_{t, \nu} = (\xi_{[a_1, b_1], \nu_1}, \xi_{[a_2, b_2], \nu_2}, \xi_{[a_3, b_3], \nu_3})$ with
\begin{equation*}
	\xi_{[a_j,b_j], \nu_j} = \frac{a_j + b_j}{2} + \frac{b_j-a_j}{2} 
	\cos \left( \frac{2 \nu_j - 1}{2 \pi (m+1)}\right), \quad \nu_j \in \{1, \ldots, m+1\},
\end{equation*}
and $L_{t,\nu}^{(m)}$ are the corresponding Lagrange polynomials, which are tensor products of the 1D Lagrange polynomials corresponding to the interpolation nodes $\{\xi_{[a_j,b_j], \nu_j}\}_{\nu_j=1}^{m+1}$.

Inserting approximation \eqref{ow:eq_approx_f_c} into \eqref{ow:eq_dir_rep_helmholtza} and grouping the terms depending on $x$ and $y$, respectively, yields the desired separable approximation
\begin{align}
	f(x,y) &\approx \sum\limits_{\nu \in M}\sum\limits_{\mu \in M} 
	  f_c(\xi_{t, \nu}, \xi_{s, \mu}) L^{(m)}_{t, c, \nu} (x) \overline{L^{(m)}_{s, c, \mu} (y)}, 
	  \label{ow:eq_dir_approx_sl} \\
	  L^{(m)}_{t, c, \nu} (x) &:= L^{(m)}_{t, \nu} (x) \exp(i\kappa \langle x, c \rangle).\label{ow:eq_lmt}
\end{align} 

The directional approximation \eqref{ow:eq_dir_approx_sl} of $f$ can be used to approximate the submatrix $A\big|_{\hat{t} \times \hat{s}}$ of the matrix $A$ in \eqref{ow:eq_helmholtz_matrix} restricted to the entries of the index sets $\hat{t}$ and $\hat{s}$ for two suitable axis-parallel boxes $t$ and $s$, i.e.,
\begin{equation} \label{ow:eq_helmholtz_matrix_entry_approx}
	A\big|_{\hat{t}\times \hat{s}}[j,k] = f(x_j, y_k) 
	\approx  \sum\limits_{\nu \in M}\sum\limits_{\mu \in M} 
	f_c(\xi_{t, \nu}, \xi_{s, \mu}) L^{(m)}_{t,c, \nu}(x_j) \overline{L^{(m)}_{s,c, \mu}}(y_k).
\end{equation}
In matrix notation this reads
\begin{equation} \label{ow:eq_helmholtz_matrix_approx_sl}
	A\big|_{\hat{t}\times \hat{s}} \approx L_{t,c} A_{c,t\times s} L_{s,c}^*,
\end{equation} 
where we define the \emph{coupling matrix} $A_{c,t\times s}\in \mathbb{C}^{(m+1)^3 \times (m+1)^3}$ by
\begin{equation} \label{ow:eq_def_coupling_matrix}
  A_{c,t\times s}[j,k] := f_c(\xi_{t,\alpha_j},\xi_{s,\beta_k}), \quad j,k \in \{1,\ldots,(m+1)^3\},
\end{equation}
for \new{suitably ordered} multi-indices $\alpha_j, \beta_k \in M= \{1, \ldots, m+1\}^3$,      
the \emph{directional interpolation matrix} $L_{t,c} \in \new{\mathbb{C}^{\hat{t} \times (m+1)^3}}$ by
\begin{equation} \label{ow:eq_dir_interpol_matrices}
	L_{t,c}[j,k] := L^{(m)}_{t, c, \alpha_k}(x_j), \quad j \in \hat{t}, k \in \{1,\ldots,(m+1)^3\}, \\
\end{equation}
and $L_{s,c}$ analogously. In particular, instead of the original $\#\hat{t} \cdot \# \hat{s}$ matrix entries only $(m+1)^3(\#\hat{t}+\#\hat{s}+(m+1)^3)$ entries have to be computed for the approximation in \eqref{ow:eq_helmholtz_matrix_approx_sl}, which is significantly less if $(m+1)^3 \ll \#\hat{t}, \#\hat{s}$.

In the following admissibility conditions we will specify for which boxes $t$ and~$s$ and which direction $c$ the approximation in \eqref{ow:eq_dir_approx_sl} is applicable. Similar criteria have been considered in \cite{ow:bebendorf3,ow:boerm1,ow:messnerschanz}. In particular, the criteria lead to exponential convergence of the approximation with respect to the interpolation degree \cite{ow:boerm1,ow:watschinger}.
\begin{definition}[{Directional admissibility \cite[cf.~Sect.~3.3]{ow:boerm1}}] \label{ow:def_dir_approx_criteria}
Let ${t,s \subset \mathbb{R}^3}$ be two axis-parallel boxes and let $c \in \mathbb{R}^3$ be a direction with $|c|=1$ or $c = 0$. Denote the midpoints of $t$ and $s$ by $m_t$ and $m_s$, respectively. Let two constants $\eta_1 > 0$ and~$\eta_2>0$ be chosen suitably. Define the diameter $\diam{t}$ and the distance $\dist{t}{s}$ by
\begin{align*}
	\diam{t} &\new{:=} \sup_{x_1,x_2 \in t} |x_1-x_2|, \quad
	\dist{t}{s} \new{:=} \inf_{x \in t, y \in s} |x-y|.
\end{align*}
We say that $t$ and $s$ are directionally admissible with respect to $c$ if the separation criterion
\begin{equation} \label{ow:eq_adm_well_sep}
	\tag{A1}
	\max\{\diam{t},\diam{s}\} \leq \eta_2 \dist{t}{s},
\end{equation}
and the two cone admissibility criteria
\begin{align}
	\tag{A2}
	\kappa \left|\frac{m_t-m_s}{|m_t-m_s|} - c \right| &\leq \frac{\eta_1}{\max\{\diam{t},\diam{s}\}}, \label{ow:eq_adm_direction}\\[8pt]
	\tag{A3}
	\kappa \max\{\diam{t},\diam{s}\}^2 &\leq \eta_2 \dist{t}{s} \label{ow:eq_adm_cone_dist}
	\vspace{-12pt}
\end{align}
are satisfied.
\end{definition}

Criterion \eqref{ow:eq_adm_well_sep} is a standard separation criterion, see, e.g., \cite{ow:greengardrokhlin} and \cite[Sect.~4.2.3]{ow:hackbuscheng}. It ensures that the boxes $t$ and $s$ are well-separated allowing for an approximation of general non-oscillating kernels.

Criterion~\eqref{ow:eq_adm_cone_dist} is similar to \eqref{ow:eq_adm_well_sep}, since it also controls the distance of two boxes $t$ and $s$. Note that \eqref{ow:eq_adm_well_sep} follows immediately from \eqref{ow:eq_adm_cone_dist} in case that $\kappa \max\{\diam{t}, \diam{s} \} > 1$ and vice versa in the opposite case. As stated in \cite[Sect.~3]{ow:boerm2}, \eqref{ow:eq_adm_cone_dist} can also be understood as a bound on the angle between all vectors $x-y$ for $x \in t$ and $y \in s$ that shrinks if $\kappa$ or $\max\{\diam{t}, \diam{s} \}$ increases. Hence, \eqref{ow:eq_adm_cone_dist} guarantees that the angle between $x-y$ and a direction~$c$ is small if the angle between the difference of the midpoints $m_t - m_s$ and $c$ is already small, which is enforced by~\eqref{ow:eq_adm_direction}.

Indeed, criterion \eqref{ow:eq_adm_direction} is used to assign a suitable direction $c$ to two non-overlapping boxes $t$ and $s$. While the choice $c = (m_t - m_s)/|m_t - m_s|$ would always guarantee \eqref{ow:eq_adm_direction}, we want to choose $c$ from a small, finite set of directions. This allows to use the same direction $c$ for a fixed box $t$ and several boxes $s_j$ and, therefore, to use the same interpolation matrix~$L_{t,c}$ for the approximation of various blocks $A\big|_{\hat{t}\times\hat{s}_j}$ as in \eqref{ow:eq_helmholtz_matrix_approx_sl}. A possible way to construct suitable sets of directions and further details on criterion \eqref{ow:eq_adm_direction} are discussed in Sect.~\ref{ow:sec_dir}. First, we want to discuss how to use criteria \eqref{ow:eq_adm_well_sep} and \eqref{ow:eq_adm_cone_dist} to construct a suitable partition of the matrix $A$ in \eqref{ow:eq_helmholtz_matrix} based on the clustering described in Sect.~\ref{ow:sec_bct}.

\subsection{Partitioning of the Matrix}

In general, the sets of evaluation points $P_T$ and $P_S$ for the matrix $A$ in \eqref{ow:eq_helmholtz_matrix} are contained in overlapping boxes~$T$ and~$S$. Therefore, the full matrix $A$ cannot be approximated directly. For this reason, we recursively construct a partition of $A$ by Alg.~\ref{ow:alg_block_ct}, which we organize in a block tree~$\mathcal{T}_{T \times S}$ (\cite[Sect.~5.5]{ow:hackbuscheng}).
\begin{algorithm}[ht]
\caption{Construction of a block tree $\mathcal{T}_{T \times S}$ \label{ow:alg_block_ct}}
\begin{algorithmic}[1] 
\State \textbf{input}: Box cluster trees $\mathcal{T}_T$ and $\mathcal{T}_S$, parameter $\eta_2$ for the criteria \eqref{ow:eq_adm_well_sep} and \eqref{ow:eq_adm_cone_dist}.
\State Set $b=(t_1^0, s_1^0)$, i.e. the pair of roots of $\mathcal{T}_T$ and $\mathcal{T}_S$.
\State Construct an empty tree $\mathcal{T}_{T \times S}$ and add $b$ as its root.
\State Call \Call{RefineBlock}{$b$, $\mathcal{T}_{T \times S}$}. 
\Statex
\Function{RefineBlock}{$b=(t,s)$, $\mathcal{T}_{T\times S}$}
	\If{$t \in \mathcal{L}_T$ \textbf{or} $s \in \mathcal{L}_S$}
		\State \Return
	\EndIf
  \If{$t$ and $s$ violate \eqref{ow:eq_adm_well_sep} or \eqref{ow:eq_adm_cone_dist}}
  		\For{$t' \in \child(t)$}
  			\For{$s' \in \child(s)$}
  				\State Add $b'=(t',s')$ to $\mathcal{T}_{T \times S}$ as child of $b$.
  				\State Call \new{\Call{RefineBlock}{$b'$, $\mathcal{T}_{T \times S}$}}.
  			\EndFor
  		\EndFor
  \EndIf
\EndFunction
\end{algorithmic}
\end{algorithm}
\begin{definition}
  \label{ow:def_block_tree}
  Let $\mathcal{T}_T$ and $\mathcal{T}_S$ be two uniform box cluster trees and let $\eta_2 > 0$. A block tree $\mathcal{T}_{T \times S}$ is constructed by Alg.~\ref{ow:alg_block_ct}.
The set of all leaves of $\mathcal{T}_{T \times S}$ is denoted by $\mathcal{L}_{T \times S}$ and split into the set of admissible (i.e.~approximable) leaves and the set of inadmissible leaves
 \begin{align*}
 	\mathcal{L}_{T \times S}^+ &:= \{b=(t,s) \in \mathcal{L}_{T \times S} : t \text{ and } s  \text{ satisfy \eqref{ow:eq_adm_well_sep} and \eqref{ow:eq_adm_cone_dist}} \},\\
\mathcal{L}_{T \times S}^- &:= \mathcal{L}_{T \times S} \ \backslash \ \mathcal{L}_{T \times S}^+.
 \end{align*} 
\end{definition}
For a given block tree $\mathcal{T}_{T \times S}$ the pairs of indices $\hat{t} \times \hat{s}$ of all leaves ${(t,s) \in \mathcal{L}_{T \times S}}$ form a partition of the full index set $\{1, \ldots, N_T\} \times \{1, \ldots, N_S\}$, i.e.~of the matrix~$A$. The matrix blocks corresponding to admissible blocks $b \in \mathcal{L}_{T \times S}^+$ can be approximated by the directional interpolation~\eqref{ow:eq_helmholtz_matrix_approx_sl}. Inadmissible blocks related to $b \in \mathcal{L}_{T \times S}^-$ are computed directly. 

\subsection{Choice of Directions} \label{ow:sec_dir}

As we would like to use relatively small numbers of directions $c$ in the directional approximations~\eqref{ow:eq_helmholtz_matrix_approx_sl}, we consider a fixed set of directions $D^{(\ell)}$ for all blocks $(t,s)$ at a given level $\ell$ of the block tree. These sets $D^{(\ell)}$ should be constructed in such a way that for all blocks $(t,s)$ at level $\ell$ in $\mathcal{L}_{T \times S}^+$ there exists a direction~${c \in D^{(\ell)}}$ such that criterion \eqref{ow:eq_adm_direction} holds for some fixed $\eta_1$.

Since the bound on the right-hand side of \eqref{ow:eq_adm_direction} increases for decreasing diameters of $t$ and $s$ and these diameters are halved for each new level of the uniform box cluster trees, the number of directions in $D^{(\ell)}$ can be reduced with increasing level~$\ell$. If the maximum of the diameters of two boxes~$t$ and~$s$ at level~$\tilde{\ell}$ is so small that the bound on the right-hand side of \eqref{ow:eq_adm_direction} is greater than~$\kappa$, then \eqref{ow:eq_adm_direction} holds for $c=0$ for all following levels. In this case, a plane wave term is not needed for the approximation of the Helmholtz kernel~$f$, and the approximation~\eqref{ow:eq_helmholtz_matrix_approx_sl} coincides with a standard tensor interpolation.
We call the \new{other} levels satisfying 
\begin{equation}
  \frac{\eta_1}{\kappa \max\{\diam{t},\diam{s}\}} \leq 1, \quad
  \text{ for all } t \in \mathcal{T}^{\ell}_T, s \in \mathcal{T}^{\ell}_S,
  \label{ow:eq_high_freq_lev}
\end{equation}
high frequency levels and denote the \emph{largest high frequency level} as $\ell_\mathrm{hf}$, or set $\ell_\mathrm{hf} = -1$ in case that all levels $\ell \geq 0$ are low frequency levels, i.e.~do not satisfy~\eqref{ow:eq_high_freq_lev}.
The value of~$\ell_\mathrm{hf}$ depends on $\eta_1$ and the uniform box cluster trees $\mathcal{T}_T$ and $\mathcal{T}_S$. In practice, we choose a suitable level $\ell_\mathrm{hf}$ instead of $\eta_1$ and construct the \emph{sets of directions} $D^{(\ell)}$, using more and more directions for levels $\ell < \ell_\mathrm{hf}$. Our construction by Alg.~\ref{ow:alg_dir} combines ideas from \cite[Sect.~4.1]{ow:engquist} and \cite[Sect.~3]{ow:boerm2}.
\begin{algorithm}[t]
\caption{Construction of directions $D^{(\ell)}$ \label{ow:alg_dir}}
\begin{algorithmic}[1]
\State \textbf{input}: Largest high frequency level $\ell_{\mathrm{hf}} \geq -1$.
\For{$\ell = \ell_\mathrm{hf}+1$, $\ell_\mathrm{hf}+2$,\ldots,$\min\{p(\mathcal{T}_T), p(\mathcal{T}_S)\}$}
	\State Set $D^{(\ell)} = \{0\}$.
\EndFor
\State Construct the six faces $\{E_j^{(\ell_\mathrm{hf})}\}_{j=1}^6$ of the cube $[-1,1]^3$, i.e. \newline
 \hspace*{10pt} $E_1^{(\ell_\mathrm{hf})} = \{-1\} \times [-1,1]^2$, 
$E_2^{(\ell_\mathrm{hf})} = \{1\} \times [-1,1]^2$, \ldots, $E_6^{(\ell_\mathrm{hf})} = [-1,1]^2 \times \{1\}$.
\State Set $D^{(\ell_\mathrm{hf})} = \{c_j^{(\ell_\mathrm{hf})}\}_{j=1}^6$ where $c_j^{(\ell_\mathrm{hf})}$ is the midpoint of $E_j^{(\ell_\mathrm{hf})}$, i.e. \newline
 \hspace*{10pt} $c_1^{(\ell_\mathrm{hf})} = (-1,0,0)$, 
$c_2^{(\ell_\mathrm{hf})} = (1,0,0)$, \ldots,
$c_6^{(\ell_\mathrm{hf})} = (0,0,1)$.
\For{$\ell = \ell_\mathrm{hf}-1, \ldots, 0$}
	\State Set $D^{(\ell)} = \emptyset$.
	\For{all faces $E_j^{(\ell+1)}$, $j=1, \ldots, 6\cdot 4^{\ell_\mathrm{hf}-\ell-1}$}
		\State Uniformly subdivide $E_j^{(\ell+1)}$ into 4 faces $E_{4(j-1)+1}^{(\ell)}$, \ldots,
		 $E_{4j}^{(\ell)}$.
		\State Construct normalized midpoints $c_{4(j-1)+1}^{(\ell)}$, \ldots, $c_{4j}^{(\ell)}$ of
		$E_{4(j-1)+1}^{(\ell)}$, \ldots, $E_{4j}^{(\ell)}$.
		\State Add directions $c_{4(j-1)+1}^{(\ell)}$, \ldots, $c_{4j}^{(\ell)}$ to $D^{(\ell)}$.
	\EndFor
\EndFor
\end{algorithmic}
\end{algorithm}

Finally, we assign a direction $c \in D^{(\ell)}$ to a pair of boxes $t$ and $s$ which is close to the normalized difference $(m_t-m_s)/|m_t-m_s|$ of the midpoints of $t$ and $s$
and, hence, can be used for the directional approximation~\eqref{ow:eq_helmholtz_matrix_approx_sl}. For this purpose, we define a mapping $\dir_{(\ell)}$ for each level $\ell \in \mathbb{N}_0$, which maps a vector $v$ in~$\mathbb{R}^3 \backslash \{0\}$ to a direction $c_j^{(\ell)}$ such that the intersection point of the ray $\{ \lambda v : \lambda > 0\}$ and the surface of the cube~$[-1,1]^3$ lies in the face $E_j^{(\ell)}$ (cf.~Alg.~\ref{ow:alg_dir}). 
\begin{definition} \label{ow:def_dir_maps}
Let $\ell_{\mathrm{hf}} \geq -1$ and let the directions $D^{(\ell)}$ and the faces $\{E_j^{(\ell)}\}$ be constructed by Alg.~\ref{ow:alg_dir}.
We define the mapping $\dir_{(\ell)}: \mathbb{R}^3 \rightarrow D^{(\ell)} \cup \{0\}$ for each $\ell \in \mathbb{N}_0$ as follows:
\begin{itemize}
\item If $\ell > \ell_{\mathrm{hf}}$ we set $\dir_{(\ell)}(v)=0$ for all $v \in \mathbb{R}^3$.
\item If $\ell \leq \ell_{\mathrm{hf}}$ we set $\dir_{(\ell)}(0)=0$. For all $v \in \mathbb{R}^3 \backslash \{0\}$ we set  $\dir_{(\ell)}(v) = c_{j(v)}^{(\ell)}$ where
\begin{align*}
  j(v) := \min\{j: \psi_Q(v) \in E_j^{(\ell)}\},
  \quad \psi_Q(v) := \frac{1}{\max\limits_{j \in \{1,\ldots, 3\}} |v_j|} \ v
\end{align*}
to avoid ambiguity.
\end{itemize}
For two boxes $t, s \subset \mathbb{R}^3$ and a level $\ell \in \mathbb{N}_0$ we define the direction $c_{(\ell)}(t,s)$ by
\begin{equation*}
	c_{(\ell)}(t,s) := \dir_{(\ell)}\left(\frac{m_t - m_s}{|m_t - m_s|}\right).
\end{equation*}
\end{definition}
In this way \eqref{ow:eq_adm_direction} is satisfied for two boxes $t$, $s$, and the direction $c_{(\ell)}(t,s)$ for a constant $\eta_1$ which depends linearly on the product $\kappa q_{\ell_\mathrm{hf}}$ \cite[Thm.~2.19]{ow:watschinger}. Here $q_{\ell_\mathrm{hf}}$ denotes the maximal diameter of all boxes at level $\ell_\mathrm{hf}$ in the trees $\mathcal{T}_T$ and~$\mathcal{T}_S$.

\subsection{Transfer Operations}

The approximation of an admissible subblock $A\big|_{\hat{t}\times \hat{s}}$ of $A$ in \eqref{ow:eq_helmholtz_matrix_entry_approx} can be further enhanced. If $t$ is a non-leaf box at level $\ell$ in a box cluster tree $\mathcal{T}_T$ with children $t_1,\ldots, t_k$, the directional interpolation matrix $L_{t,c}$ can be approximated using the matrices $L_{t_j,c_{\ell+1}}$ for a suitable direction $c_{\ell+1}$. We describe this approach following \cite[Sect.~2.2.2]{ow:boerm1}.

Let us rewrite the generating functions $L^{(m)}_{t, c, \nu}(x)$ of $L_{t,c}$  in~\eqref{ow:eq_lmt} by
\begin{equation*}
	L^{(m)}_{t, c, \nu}(x) = \exp(i \kappa \langle x, c_{\ell+1} \rangle ) 
	\left[ \exp(i \kappa \langle x, c - c_{\ell+1} \rangle ) L^{(m)}_{t, \nu}(x) \right].
\end{equation*}
If $c_{\ell+1}$ is sufficiently close to $c$, the term in square brackets is smooth and can be interpolated for points $x$ in a child box $t_j$ yielding
\begin{equation*}
	\exp(i \kappa \langle x, c - c_{\ell+1} \rangle ) L^{(m)}_{t, \nu}(x) \approx 
	\sum\limits_{\tilde{\nu} \in M} 
	\exp( i \kappa \langle \xi_{t_{j},\tilde{\nu}}, c - c_{\ell+1} \rangle)
	L^{(m)}_{t, \nu}(\xi_{t_{j},\tilde{\nu}}) L^{(m)}_{t_{j}, \tilde{\nu}} (x).
\end{equation*}
This provides an approximation of the restriction of $L^{(m)}_{t, c, \nu}$ to the child $t_j$
\begin{equation*}
	L^{(m)}_{t, c, \nu}\big|_{t_j}(x) \approx \sum\limits_{\tilde{\nu} \in M} 
	\left( \left[ \exp( i \kappa \langle \xi_{t_{j},\tilde{\nu}}, c - c_{\ell+1} \rangle)
	L^{(m)}_{t, \nu}(\xi_{t_{j},\tilde{\nu}}) \right] L^{(m)}_{t_{j}, c_{\ell+1}, \tilde{\nu}} (x) \right).
\end{equation*}
In matrix notation the related restriction to the index set $\hat{t}_j$ reads as
\begin{equation} \label{ow:eq_approx_dir_interpol_mat}
	L_{t, c}|_{\hat{t}_j \times (m+1)^3} \approx L_{t_j, c_{\ell+1}} E_{t_j, c},
\end{equation}
where the entries of the transfer matrix $E_{t_{j}, c} \in \mathbb{C}^{(m+1)^3 \times (m+1)^3}$ are defined by
\begin{equation} \label{ow:eq_def_transfer_matrix}
	E_{t_{j}, c}[k, \ell] :=  
	\exp( i \kappa \langle \xi_{t_{j},\nu_k}, c - c_{\ell+1} \rangle)
	L^{(m)}_{t, \nu_\ell}(\xi_{t_{j},\nu_k}),
\end{equation}
for all $k, \ell \in \{1, \ldots, (m+1)^3\}$.

A suitable choice \cite[Thm.~2.19]{ow:watschinger} for the direction $c_{\ell+1}$ is given by $\dir_{(\ell+1)}(c)$, with $\dir_{(\ell+1)}$ given in Def.~\ref{ow:def_dir_maps}. Since this direction depends only on $c$ and the level~$\ell$ of the box $t$, it is reasonable to omit the dependence of the transfer matrix~$E_{t_{j}, c}$ on $c_{\ell+1}$ in the notation.

\subsection{Main Algorithm} \label{ow:sec_main_algorithm}

In the previous sections, we have described how to partition the matrix~\eqref{ow:eq_helmholtz_matrix} and how to approximate suitable subblocks. Here we explain the complete algorithm for a a matrix-vector multiplication $g = A v$.

The idea is to execute the multiplication blockwise according to the partition induced by the leaves~$\mathcal{L}_{T \times S}$  of the block tree $\mathcal{T}_{T \times S}$. Inadmissible blocks from $\mathcal{L}_{T \times S}^-$ are multiplied directly with the target vector~$v$. For admissible blocks from $\mathcal{L}_{T \times S}^+$  we use the decomposition \eqref{ow:eq_helmholtz_matrix_approx_sl} and split the multiplication into three phases. This is similar to the usual three-phase algorithm for $\mathcal{H}^2$-matrices~\cite[Sect.~8.7]{ow:hackbuscheng} and the FMM~\cite{ow:greengardrokhlin} with adaptations due to the directional approximation. We describe the scheme first for one block corresponding to an admissible pair of boxes $t$ and $s$ at level $\ell$ and then give a description of the complete algorithm.

In the first phase, the forward transformation, the product $\tilde{v}_{s,c} := L_{s,c}^* v|_{\hat{s}}$ is computed. If $s$ is a leaf in the cluster tree, this is done directly by~\eqref{ow:eq_dir_interpol_matrices}. This is also known as S2M (source to moment) step in fast multipole methods. If $s$ is not a leaf, approximation \eqref{ow:eq_approx_dir_interpol_mat} with $c_{\ell+1} = \dir_{(\ell+1)}(c)$ is used \new{iteratively} to get 
\begin{equation*} 
	\new{\tilde{v}_{s,c} := 
	\sum_{s_j \in \child(s)} E_{s_j,c}^* \tilde{v}_{s_j,c_{\ell+1}} 
		\approx \sum_{s_j \in \child(s)} E_{s_j,c}^* 
			\left[ L_{s_j,c_{\ell+1}}^* v|_{\hat{s}_j} \right]
		\approx L_{s,c}^* v|_{\hat{s}},}
\end{equation*}
by using the products of the children, which is also known as M2M operation (moment to moment).
In the second phase, which is called multiplication phase or M2L (moment to local) step, the product 
\begin{equation*}
	\tilde{g}_{t,c} := A_{c, t \times s} \tilde{v}_{s,c}
\end{equation*}
is computed by \eqref{ow:eq_def_coupling_matrix}. In the complete algorithm all contributions from various boxes $s$ are added up, i.e.
\begin{equation*}
	\tilde{g}_{t,c} := \sum_{s: (t,s) \in \mathcal{L}_{T \times S}^+} A_{c, t \times s} \tilde{v}_{s,c}.
\end{equation*}
In the third phase, the so-called backward transformation, the product 
\begin{equation} \label{ow:eq_l2t}
	\tilde{g}|_{\hat{t}} := L_{t,c} \tilde{g}_{t,c}
\end{equation}
is computed. If $t$ is a leaf, this is done directly.  This step is known as L2T (local to target) in fast multipole methods. If $t$ is not a leaf, the approximation~\eqref{ow:eq_approx_dir_interpol_mat} is used to compute 
\begin{equation} \label{ow:eq_l2l}
	\tilde{g}_{t_j,c_{\ell + 1}} = E_{t_j,c}\ \tilde{g}_{t,c},
\end{equation}
for all children $t_j$ of $t$, which is also known as L2L operation (local to local), and the evaluation \eqref{ow:eq_l2t} takes place for descendants which are leaves. In the complete algorithm the local contribution in \eqref{ow:eq_l2l} is added to the existing contribution $\tilde{g}_{t_j, c_{\ell+1}}$ originating from the multiplication phase.

Before we present the complete Alg.~\ref{ow:alg_fdmvm}, we define the sets of active and inherited directions for each box in the cluster trees $\mathcal{T}_T$ and $\mathcal{T}_S$. These are used to keep track of all required directions for boxes $t$ and $s$ in the trees $\mathcal{T}_T$ and $\mathcal{T}_S$. They can be generated during the construction of the block tree $\mathcal{T}_{T \times S}$.
\begin{definition} \label{ow:def_active_directions}
Let $\mathcal{T}_T$ and $\mathcal{T}_S$ be two uniform box cluster trees, $\mathcal{T}_{T \times S}$ the corresponding block tree and $\ell_\mathrm{hf} \geq -1$. Recalling Alg.~\ref{ow:alg_dir} and Def.~\ref{ow:def_dir_maps} we define for all~$\ell \geq 0$ and all $t \in \mathcal{T}_T^{(\ell)}$ the set of active directions by
\begin{equation*}
	D(t) := \{c \in D^{(\ell)} : \exists \ s \in \mathcal{T}_S^{(\ell)}
	\text{ such that } (t,s) \in \mathcal{L}_{T \times S}^+ \text{ and } c = c_{(\ell)}(t,s) \}.
\end{equation*}
The set of inherited directions $\hat{D}(t)$ is defined recursively by setting $\hat{D}(t_1^0) = \emptyset$ for the root $t_1^0$ of $\mathcal{T}_T$, and for all~$\ell > 0$ and all $t \in \mathcal{T}_T^{(\ell)}$ by setting
\begin{equation*}
	\hat{D}(t) := \{\hat{c} \in D^{(\ell)} : 
	\exists \ c \in D(t') \cup \hat{D}(t') \text{ such that } \hat{c} = \dir_{(\ell)}(c),\ t' = \parent(t) \}.
\end{equation*}
Analogously, the sets of active directions $D(s)$ and inherited directions $\hat{D}(s)$ are defined for clusters $s \in \mathcal{T}_S$.
\end{definition}
\begin{algorithm}[H]
\caption{Fast directional matrix vector multiplication $g \approx A v$ \label{ow:alg_fdmvm}}
\begin{algorithmic}[1] 
\State \textbf{input}: Box cluster trees $\mathcal{T}_T$ and $\mathcal{T}_S$, block tree $\mathcal{T}_{T \times S}$, interpolation degree $m$,

sets of directions $D(t)$, $\hat{D}(t)$, $D(s)$, $\hat{D}(s)$ for all boxes $t$, $s$ in $\mathcal{T}_T$, $\mathcal{T_S}$.
\State Initialize $g = 0$.
\State \Comment{Forward transformation}
\For{all leaves $s \in \mathcal{L}_S$} \label{ow:alg_line_start_s2m}
	\For{all directions $c \in D(s) \cup \hat{D}(s)$}
		\State Compute $\tilde{v}_{s,c} = L_{s,c}^* v|_{\hat{s}}$. \label{ow:alg_line_end_s2m}
	\EndFor
\EndFor

\For{all levels $\ell = p(\mathcal{T}_S) - 1$, \ldots, $0$} \label{ow:alg_line_start_m2m}
	\For{all non-leaf boxes $s \in \mathcal{T}^{(\ell)}_S \backslash \mathcal{L}_S$} 
		\For{all directions $c \in D(s) \cup \hat{D}(s)$}
			\State Set $\tilde{v}_{s,c} = 0$.
			\For{all $s' \in \child(s)$}
				\State Update $\tilde{v}_{s,c} \mathrel{+}=
															E_{s',c}^* \tilde{v}_{s',c'}$, 
				where $c' = \dir_{(\ell+1)}(c)$. \label{ow:alg_line_end_m2m}
			\EndFor
		\EndFor
	\EndFor
\EndFor

\State \Comment{Multiplication phase}
\For{all boxes $t \in \mathcal{T}_T$} 
	\For{all directions $c \in D(t) \cup \hat{D}(t)$}
		\State Initialize $\tilde{g}_{t,c}=0$. 
	\EndFor
	\For{all boxes $s \in \mathcal{T}_S$ such that \label{ow:alg_line_m2l_loop}
			 $(t,s) \in \mathcal{L}^+_{T \times S}$}
		\State Update $\tilde{g}_{t,c} \mathrel{+}= 
									 A_{c, t \times s} \tilde{v}_{s,c}$,
		where $c = \dir_{(\ell)}(t,s)$ and $\ell = \level(t)$.
	\EndFor
\EndFor

\State \Comment{Backward transformation}
\For{all levels $\ell = 0$, \ldots, $p(\mathcal{T}_T) - 1$} \label{ow:alg_line_start_l2l}
	\For{all non-leaf boxes $t \in \mathcal{T}^{(\ell)}_T \backslash \mathcal{L}_T$} 
		\For{all directions $c \in D(t) \cup \hat{D}(t)$}
			\For{all $t' \in \child(t)$}
				\State Update $\tilde{g}_{t',c'} \mathrel{+}=
															E_{t',c} \tilde{g}_{t,c}$, 
				where $c' = \dir_{(\ell+1)}(c)$. \label{ow:alg_line_end_l2l}
			\EndFor
		\EndFor
	\EndFor
\EndFor

\For{all leaves $t \in \mathcal{L}_T$}  \label{ow:alg_line_start_l2t}
	\For{all directions $c \in D(t) \cup \hat{D}(t)$}
		\State Update $g|_{\hat{t}} \mathrel{+}= L_{t,c} \tilde{g}_{t,c}$. 
		\label{ow:alg_line_end_l2t}
	\EndFor
\EndFor

\State \Comment{Nearfield evaluation}
\For{all blocks $b=(t,s) \in \mathcal{L}^-_{T \times S}$} \label{ow:alg_line_start_nearfield}
	\State Update $g|_{\hat{t}} \mathrel{+}= 
								A|_{\hat{t} \times \hat{s}} v|_{\hat{s}}$. \label{ow:alg_line_end_nearfield}
\EndFor
\end{algorithmic}
\end{algorithm}

\subsection{Implementation Details} \label{ow:sec_implementation_details}
In this section, we describe how to exploit the uniformity of the box cluster trees \new{to reduce the storage required by the transfer matrices~$E_{t',c}$ defined in~\eqref{ow:eq_def_transfer_matrix} and the coupling matrices $A_{c,t \times s}$ defined in \eqref{ow:eq_def_coupling_matrix}. This is crucial as there is a large number of such matrices involved in the computations in Alg.~\ref{ow:alg_fdmvm}.}

For a level $\ell \geq 0$, a box $t \in \mathcal{T}_T^\ell$, a child $t'$ and directions $c$ and ${c'=\dir_{(\ell+1)}(c)}$ we consider the transfer matrix $E_{t',c}$ which has the entries
\begin{equation*}
	E_{t',c}[j, k] = 
	\exp(i \kappa \langle \xi_{t', \nu_{j}}, c - c' \rangle) L_{t,\nu_k}^{(m)}(\xi_{t', \nu_j}), 
	\quad j,k \in \{1,\ldots, (m+1)^3\}.
\end{equation*} 
This matrix can be split into a directional and a non-directional part by
\begin{equation*} 
	E_{t',c} = E^\mathrm{d}_{t',c} E_{t'},
\end{equation*}
where we define the directional part $E^\mathrm{d}_{t',c}$ and the non-directional part $E_{t'}$  by
\begin{align*}
	E^\mathrm{d}_{t',c} 
	&:= \diag\left(\{\exp(i \kappa \langle \xi_{t', \tilde{\nu}}, c - c' \rangle)\}_{\tilde{\nu} \in M}\right), \\
	E_{t'}[j, k] &:= L_{t,\nu_k}^{(m)}(\xi_{t', \nu_j}), \quad j,k \in \{1,\ldots, (m+1)^3\}.
\end{align*}
Let us consider the non-directional part $E_{t'}$ first. The value of the Lagrange polynomial $L_{t, \nu}^{(m)}$ depends only on the position of the evaluation point $\xi_{t', \mu}$ relative to the box $t$. Together with the uniformity of the box cluster tree $\mathcal{T}_T$, this implies that each $E_{t'}$ is identical to one of 8 non-directional transfer matrices in a reference configuration. Only these reference matrices of size $(m+1)^3\times (m+1)^3$ have to be computed and stored. 
The directional part $E^\mathrm{d}_{t',c}$ changes for varying boxes $t$, child boxes~$t'$ or directions $c$. Since it is diagonal, however, only $(m+1)^3$ entries instead of $(m+1)^6$ entries need to be computed. Furthermore, for low frequency levels~{$\ell > \ell_\mathrm{hf}$} the directional part $E^\mathrm{d}_{t',c}$ becomes the identity and no additional computations are required.

Next we consider the coupling matrices $A_{c,t\times s}$ defined in \eqref{ow:eq_def_coupling_matrix} for admissible blocks $(t,s)$ in a block tree $\mathcal{T}_{T \times S}$. $A_{c,t\times s}$ depends on the difference of the cluster centers only, see~\eqref{ow:eq_dir_rep_helmholtz}. Due to the uniformity of the box cluster trees, many of the coupling matrices coincide.
In particular, it suffices to compute and store all required coupling matrices for all levels $\ell$ only once for a reference configuration and assign them to the appropriate blocks $(t,s) \in \mathcal{L}_{T \times S}^+$.

The dimension of the coupling matrices \eqref{ow:eq_def_coupling_matrix} increases cubically in the interpolation degree $m$. A compression of these matrices by a low rank approximation
\begin{equation*}
	A_{c,t \times s} \approx U_{c,t \times s} V_{c,t \times s}^*,
\end{equation*}
with $U_{c,t \times s}$, $V_{c,t \times s} \in \mathbb{C}^{(m+1)^3\times k}$ for some low rank $k$, increases the performance of the algorithm (cf.~\cite{ow:messnerschanz}). Such approximations exist because the coupling matrices are generated by smooth functions. For their construction, we apply a partially pivoted ACA \cite{ow:bebendorfrjasanow,ow:rjasanow} in our implementation and the examples in Sect.~\ref{ow:sec:num:examples}, but do not analyze its effect on the complexity in the following section. A more involved compression strategy is described in~\cite{ow:boerm4}.

\section{Complexity Analysis} \label{ow:sec_complexity_analysis}
To analyze the complexity of Alg.~\ref{ow:alg_fdmvm} for fast directional matrix vector multiplications, we estimate the number of directional interpolation matrices and transfer matrices in Thm.~\ref{ow:th_nr_transfer_mat}, give then an estimate for the number of coupling matrices in Thm.~\ref{ow:thm_nr_coupling_mat} and~\ref{ow:thm_stored_coupling_mat} and finally estimate the number of nearfield matrices in Thm.~\ref{ow:thm_est_nearfield_entries}.
We start by establishing the general setting.

Throughout this section we fix the wave number $\kappa > 0$ and the sets of points $P_T=\{x_j\}_{j=1}^{N_T}$ and $P_S=\{y_k\}_{k=1}^{N_S}$, which may but do not have to coincide, and set $N = \max\{N_T, N_S\}.$ In all considerations $\mathcal{T}_T$ and $\mathcal{T}_S$ denote two uniform box cluster trees as constructed in Alg.~\ref{ow:alg_box_ct} for a fixed parameter $n_{\max}$. We set the maximum and the minimum of the depths of the trees $\mathcal{T}_T$ and $\mathcal{T}_S$
\begin{equation*}
  p_{\max} := \max\{p(\mathcal{T}_T), p(\mathcal{T}_S)\}, \quad
  p_{\min} := \min\{p(\mathcal{T}_T), p(\mathcal{T}_S)\}.
\end{equation*}
The diameters of all boxes at a fixed level $\ell$ of $\mathcal{T}_T$ are identical and denoted as $q_\ell(\mathcal{T}_T)$ just like the diameters $q_\ell(\mathcal{T}_S)$ of boxes at level $\ell$ in $\mathcal{T}_S$. For all levels $\ell \leq p_{\min}$ we define
\begin{equation*}
	q_\ell := \max\{q_\ell(\mathcal{T}_T), q_\ell(\mathcal{T}_S)\}.
\end{equation*}
The related block tree~$\mathcal{T}_{T \times S}$ is constructed by Alg.~\ref{ow:alg_block_ct} for a fixed parameter~$\eta_2$. For the directional approximation we use a small, fixed interpolation degree $m$ and the directions $D^{(\ell)}$, constructed by Alg.~\ref{ow:alg_dir} for a fixed choice of the largest high frequency level $\ell_\mathrm{hf} \geq -1$.

For the complexity analysis we will need a few assumptions which we collect and discuss here. We assume that there exist small constants $c_{\text{geo}}$, $c_{\max}$, $c_{\text{ad}}$ and $c_{\text{un}} \in \mathbb{R}_+$ such that the following assumptions hold true:
\begin{align} 
	n_{\max} &\leq c_{\max} (m+1)^3, \label{ow:eq_assume_sparse_leafs} \\
	q_0 &\leq c_{\text{un}} \min\{q_0(\mathcal{T}_T), q_0(\mathcal{T}_S)\},
	\label{ow:eq_assume_comparable_trees} \\
	p_{\max} &\leq \log_8(N) + c_{\text{ad}}, \label{ow:eq_assume_tree_uniformity} \\
	\kappa q_0 &\leq c_{\text{geo}} \sqrt[3]{N}. \label{ow:eq_assume_resolve_wavelength}
\end{align}
In addition, $\ell_\mathrm{hf}$ is assumed to be chosen such that
\begin{equation}
	\ell_\mathrm{hf} +1 \leq p_{\max} + c_{\mathrm{hf}}, \label{ow:eq_assume_reasonable_lhf}
\end{equation}
for a small constant $c_\mathrm{hf} \in \mathbb{N}_0$. Furthermore, we require that \eqref{ow:eq_assume_few_uncontrollable_leaves} holds, which we introduce and discuss~later.
Let us shortly discuss above assumptions. By equation \eqref{ow:eq_assume_sparse_leafs} we ensure that the maximal number of points in leaf boxes of the cluster trees is reasonably small. Assumption \eqref{ow:eq_assume_comparable_trees} means that the diameters of the root boxes of $\mathcal{T}_T$ and~$\mathcal{T}_S$ should be of comparable size. While this is not satisfied in general, one can enforce it by an initial subdivision of the greater box and application of the method to the resulting subboxes.
Eqn.~\eqref{ow:eq_assume_tree_uniformity} is an indirect assumption on the sets of points~$P_T$ and $P_S$, which holds if the points are distributed more or less uniformly in a 3D domain. \new{ Also Eqn.~\eqref{ow:eq_assume_resolve_wavelength} is reasonable only if points are distributed rather uniformly in a 3D volume, and guarantees that the wave length $\lambda = 2 \pi / \kappa$ is resolved in that case, which is required in typical physical applications.} Finally, Eqn.~\eqref{ow:eq_assume_reasonable_lhf} is a bound on the largest high frequency level~$\ell_\mathrm{hf}$ and allows to bound the number of directions constructed in~Alg.~\ref{ow:alg_dir}. With these assumptions we can start with the complexity analysis, which is based on the following obvious, but important observation.

\begin{remark}\label{ow:rem:one}
In Alg.~\ref{ow:alg_fdmvm} every directional interpolation matrix $L_{t,c}$ and $L_{s,c}$, every transfer matrix~$E_{t',c}$ and $E_{s',c}$, every coupling matrix $A_{c,t \times s}$ and every nearfield matrix $A|_{\hat{t} \times \hat{s}}$ is multiplied with a suitable vector exactly once. All entries of these matrices can be computed with $\mathcal{O}(1)$ operations. Since the complexity of the application of a matrix to a vector is proportional to the number of its entries, it suffices to count all these matrices and their respective entries to estimate the storage and runtime complexity of Alg.~\ref{ow:alg_fdmvm}.
\end{remark}

\begin{theorem}\label{ow:th_nr_transfer_mat}
Let assumption \eqref{ow:eq_assume_reasonable_lhf} hold true. Then there exists a constant $c_{\mathrm{LE}}$ depending only on $c_\mathrm{hf}$ such that  \new{the number $N_{\mathrm{LE}}$ of applied transfer matrices~$E_{t',c}$ and $E_{s',c}$ and directional interpolation matrices $L_{t,c}$ and~$L_{s,c}$} in Alg.~\ref{ow:alg_fdmvm} is bounded~by
\begin{equation} \label{ow:eq_estimate_transfer_and_interpol_total}
	\new{ N_{\mathrm{LE}} \leq c_{\mathrm{LE}} \, 8^{p_{\max}}. }
\end{equation}
If \eqref{ow:eq_assume_sparse_leafs} and \eqref{ow:eq_assume_tree_uniformity} apply in addition, these matrices can be stored and applied with complexity $\mathcal{O}(N)$.
\end{theorem}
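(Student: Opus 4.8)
The plan is to count the relevant matrices level by level in each of the two cluster trees and to exploit the fact that the number of directions involved at any level is uniformly bounded.

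First I would recall that the directional interpolation matrices $L_{t,c}$, $L_{s,c}$ and the transfer matrices $E_{t',c}$, $E_{s',c}$ applied in Algorithm~\ref{ow:alg_fdmvm} are indexed by a box in $\mathcal{T}_T$ or $\mathcal{T}_S$ together with a direction in its set of active or inherited directions $D(\cdot) \cup \hat{D}(\cdot)$. So the total number $N_{\mathrm{LE}}$ is bounded by $\sum_{t \in \mathcal{T}_T} \#(D(t) \cup \hat{D}(t)) + \sum_{s \in \mathcal{T}_S} \#(D(s) \cup \hat{D}(s))$, up to a factor of at most $8$ accounting for the children $t'$ (a non-leaf box contributes one $E_{t',c}$ per child per direction, and at most one $L_{t,c}$ per direction). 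It therefore suffices to bound $\#(D(t) \cup \hat{D}(t))$ for a box $t$ at level $\ell$, and then to sum over all boxes.

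The key quantitative input is that the total number of directions available at level $\ell$, namely $\#D^{(\ell)}$, is bounded. By construction in Algorithm~\ref{ow:alg_dir}, $\#D^{(\ell)} = 1$ for $\ell > \ell_{\mathrm{hf}}$, $\#D^{(\ell_{\mathrm{hf}})} = 6$, and $\#D^{(\ell)} = 6 \cdot 4^{\ell_{\mathrm{hf}} - \ell}$ for $0 \le \ell < \ell_{\mathrm{hf}}$; since $D(t) \cup \hat{D}(t) \subseteq D^{(\ell)} \cup \{0\}$ we get $\#(D(t) \cup \hat{D}(t)) \le 6 \cdot 4^{\ell_{\mathrm{hf}} - \ell} + 1$ for $\ell \le \ell_{\mathrm{hf}}$ and $\le 2$ for $\ell > \ell_{\mathrm{hf}}$. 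Next, the number of boxes at level $\ell$ of a uniform box cluster tree is at most $\min\{8^\ell, N\}$ — at most $8^\ell$ by the $8$-fold subdivision, and at most $N$ since each box at level $\ell$ contains at least one point. Splitting the sum over levels $\ell \le p_{\max}$ at $\ell = \ell_{\mathrm{hf}}$ and using $8^\ell \cdot 4^{\ell_{\mathrm{hf}} - \ell} = 8^{\ell_{\mathrm{hf}}} \cdot 2^{-(\ell_{\mathrm{hf}} - \ell)} \cdot 4^{\ell_{\mathrm{hf}} - \ell} \cdot \dots$ — more cleanly, $8^\ell \cdot 4^{\ell_{\mathrm{hf}} - \ell} = 4^{\ell_{\mathrm{hf}}} \cdot 2^{\ell}$, which is a geometric sum in $\ell$ dominated by its top term at $\ell = \ell_{\mathrm{hf}}$, giving $\sum_{\ell \le \ell_{\mathrm{hf}}} 8^\ell \#D^{(\ell)} \lesssim 8^{\ell_{\mathrm{hf}}}$. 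For $\ell > \ell_{\mathrm{hf}}$ the factor $\#D^{(\ell)}$ is bounded by $2$, so that part of the sum is $\lesssim 8^{p_{\max}}$. Combining and using assumption~\eqref{ow:eq_assume_reasonable_lhf}, i.e. $8^{\ell_{\mathrm{hf}}} \le 8^{\ell_{\mathrm{hf}}+1} \le 8^{p_{\max} + c_{\mathrm{hf}}} = 8^{c_{\mathrm{hf}}} 8^{p_{\max}}$, yields $N_{\mathrm{LE}} \le c_{\mathrm{LE}} 8^{p_{\max}}$ with $c_{\mathrm{LE}}$ depending only on $c_{\mathrm{hf}}$. For the final sentence, the bound $8^{p_{\max}} \le 8^{c_{\mathrm{ad}}} N$ from~\eqref{ow:eq_assume_tree_uniformity} turns this into $\mathcal{O}(N)$ matrices; each has $(m+1)^6$ entries with $m$ fixed, hence $\mathcal{O}(1)$ each, so by Remark~\ref{ow:rem:one} the storage and application cost is $\mathcal{O}(N)$ — here~\eqref{ow:eq_assume_sparse_leafs} is what keeps $(m+1)^3$ a legitimate $\mathcal{O}(1)$ quantity relative to the leaf sizes and ensures the leaf-level $L$-matrices are also $\mathcal{O}(1)$ in size.

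The main obstacle I anticipate is handling the crossover at level $\ell_{\mathrm{hf}}$ correctly: one must check that the geometric series $\sum_\ell 8^\ell \cdot 4^{\ell_{\mathrm{hf}}-\ell}$ on the high-frequency side really is dominated by its largest term (it is, since the ratio is $2 < 8$ going down in $\ell$, i.e. the terms grow geometrically toward $\ell = \ell_{\mathrm{hf}}$), and to make sure the $\min\{8^\ell, N\}$ bound is used consistently so that neither side of the sum is accidentally overcounted — in fact for the clean $8^{p_{\max}}$ bound one does not even need the $N$-cap, it is only invoked at the very end via~\eqref{ow:eq_assume_tree_uniformity}. A secondary point requiring care is the precise bookkeeping of how many $L$- and $E$-matrices a single box contributes (one $L_{t,c}$ only if $t$ is a leaf, one $E_{t',c}$ per child per direction if $t$ is not a leaf), but since the number of children is at most $8$ this only affects the constant.
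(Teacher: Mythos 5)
Your argument is correct and follows essentially the same route as the paper's proof: bound the per-box contribution by $\#D^{(\ell)}$ times a constant factor for the (at most $8$) children, multiply by $\#\mathcal{T}_T^{(\ell)}\le 8^\ell$, observe that $\sum_{\ell\le\ell_{\mathrm{hf}}}8^\ell\cdot 4^{\ell_{\mathrm{hf}}-\ell}=4^{\ell_{\mathrm{hf}}}\sum 2^\ell\lesssim 8^{\ell_{\mathrm{hf}}}$ while the low-frequency tail is $\lesssim 8^{p_{\max}}$, and convert $8^{\ell_{\mathrm{hf}}}$ into $8^{p_{\max}}$ via \eqref{ow:eq_assume_reasonable_lhf}; the complexity claim then follows from \eqref{ow:eq_assume_tree_uniformity} together with \eqref{ow:eq_assume_sparse_leafs}, which bounds the $\#\hat t\times(m+1)^3$ interpolation matrices at the leaves by $c_{\max}(m+1)^6$ entries (a point your final sentence states slightly loosely but captures correctly). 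The only cosmetic difference is that the paper splits the leaf/non-leaf bookkeeping into an explicit per-level bound $B^{(\ell)}$ and treats the cases $p(\mathcal{T}_T)\le\ell_{\mathrm{hf}}$ and $p(\mathcal{T}_T)>\ell_{\mathrm{hf}}$ separately to extract explicit constants.
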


\begin{proof}
\new{
We start to estimate the number $N_{\mathrm{LE},T}$ of applied transfer matrices $E_{t',c}$ for L2L operations in lines \ref{ow:alg_line_start_l2l}--\ref{ow:alg_line_end_l2l} of Alg.~\ref{ow:alg_fdmvm} and directional interpolation matrices~$L_{t,c}$ for L2T operations in lines \ref{ow:alg_line_start_l2t}--\ref{ow:alg_line_end_l2t}. For this purpose we estimate the number of such matrices for each box $t$ in $\mathcal{T}_T$.

Let us first assume, that $t \in \mathcal{T}_{T}^{\ell}$ is a non-leaf box at level $\ell$. In this case a transfer matrix is applied for each direction $c \in D(t) \cup \hat{D}(t)$ and each box $t' \in \child(t)$, but no directional interpolation matrix. The number of directions in $D(t) \cup \hat{D}(t)$ is bounded by $\# D^{(\ell)}$, which is $6\cdot 4^{\ell_\mathrm{hf}-\ell}$ if~$\ell \leq \ell_\mathrm{hf}$ and 1 else, and $\# \child(t) \leq 8$ for all $t$ due to the uniformity of the box cluster tree. Therefore, the total number $N_{\mathrm{LE}}(t)$ of transfer and directional interpolation matrices needed for a non-leaf box $t \in \mathcal{T}_{T}^{\ell}$ is bounded by 
\begin{equation*}
	B^{(\ell)} =
	\begin{cases}
		48\cdot 4^{\ell_\mathrm{hf}-\ell}, & \text{ if } \ell \leq \ell_\mathrm{hf}, \\
		8, & \text{ otherwise.}
	\end{cases}
\end{equation*}

If $t \in \mathcal{T}_{T}^{\ell}$ is a leaf box then we only need a directional interpolation matrix for each direction $c \in D(t) \cup \hat{D}(t)$ but no transfer matrix. Therefore, $N_{\mathrm{LE}}(t)$ is bounded by $6\cdot 4^{\ell_\mathrm{hf}-\ell}$ if $\ell \leq \ell_\mathrm{hf}$ and by~1 otherwise. Since this bound is less than~$B^{(\ell)}$ for all levels $\ell$, there holds $N_{\mathrm{LE}}(t) \leq B^{(\ell)}$ for all boxes $t \in \mathcal{T}_T^{\ell}$.

The number $N_{\mathrm{LE},T}$ of all directional interpolation matrices and transfer matrices for boxes $t \in \mathcal{T}_T$ can hence be estimated by
\begin{equation*}
	N_{\mathrm{LE},T} = \sum_{\ell=0}^{p(\mathcal{T}_T)} \sum_{t \in \mathcal{T}_T^{\ell}} N_{\mathrm{LE}}(t)
		\leq \sum_{\ell=0}^{p(\mathcal{T}_T)} \# \mathcal{T}_T^{\ell} \, B^{(\ell)}.
\end{equation*} 
Due to the uniformity of the box cluster tree there holds $\#\mathcal{T}_T^{\ell} \leq 8^\ell$. Let us first assume that all levels in $\mathcal{T}_T$ are high frequency levels, i.e. $p(\mathcal{T}_T) \leq \ell_\mathrm{hf}$. Then we can further estimate
\begin{equation}
	N_{\mathrm{LE},T} \leq \sum_{\ell=0}^{p(\mathcal{T}_T)} 48 \cdot 4^{\ell_\mathrm{hf}-\ell}\, 8^{\ell}
		< 48\cdot 4^{\ell_\mathrm{hf}}\,2^{p(\mathcal{T}_T)+1} \leq 24 \cdot 4^{c_\mathrm{hf}} \, 8^{p_{\max}},
\end{equation}
where we used assumption \eqref{ow:eq_assume_reasonable_lhf} in the last step. If instead $p(\mathcal{T}_T) > \ell_\mathrm{hf}$, we get
\begin{equation} 
	\begin{split}
		N_{\mathrm{LE},T} &\leq \sum_{\ell=0}^{\ell_{\mathrm{hf}}} 48 \cdot 4^{\ell_\mathrm{hf}-\ell}\, 8^{\ell} 
			+ \sum_{\ell=\ell_{\mathrm{hf}}+1}^{p(\mathcal{T}_T)} 8^{\ell+1} \\
		&\leq 12 \cdot 8^{\ell_\mathrm{hf}+1} + 8\, (8^{p(\mathcal{T}_T)+1} - 8^{\ell_\mathrm{hf}+1}) 
			\leq 68 \cdot 8^{p_{\max}}.
	\end{split}
\end{equation}

Analogously, we can estimate the number $N_{\mathrm{LE},S}$ of transfer matrices and directional interpolation matrices needed for the S2M and M2M operations in lines~\ref{ow:alg_line_start_s2m}--\ref{ow:alg_line_end_m2m} of Alg.~\ref{ow:alg_fdmvm}. Therefore, the estimate on the number $N_{\mathrm{LE}}$ of all transfer and directional interpolation matrices in \eqref{ow:eq_estimate_transfer_and_interpol_total} holds with ${c_{\mathrm{LE}} = 2 \cdot \max( 68, 24 \cdot 4^{c_\mathrm{hf}} )}$.

To prove the complexity statement we observe that every transfer matrix~\eqref{ow:eq_def_transfer_matrix} has $(m+1)^6$ entries and every directional interpolation matrix~\eqref{ow:eq_dir_interpol_matrices} has at most $n_{\max} (m+1)^3 \leq c_{\max}(m+1)^6$ entries by assumption~\eqref{ow:eq_assume_sparse_leafs}. Therefore, the linear complexity is a direct consequence of \eqref{ow:eq_estimate_transfer_and_interpol_total}, if in addition \eqref{ow:eq_assume_tree_uniformity} holds. \qed
}		
\end{proof}

\begin{theorem} \label{ow:thm_nr_coupling_mat}
Let assumption \eqref{ow:eq_assume_comparable_trees} hold true. Then there exists a constant $c_\mathrm{C}$ depending only on $c_\mathrm{un}$ and $\eta_2$, such that the number $N_{\mathrm{C}}$ of all coupling matrices $A_{c,t \times s}$ in Alg.~\ref{ow:alg_fdmvm} is bounded by
\begin{equation} \label{ow:eq_est_nr_coupling_mat}
	N_{\mathrm{C}} \leq c_\mathrm{C} \left(p_{\min} (q_0 \kappa)^3 + 8^{p_{\min}}\right).
\end{equation}
If in addition \eqref{ow:eq_assume_tree_uniformity} and \eqref{ow:eq_assume_resolve_wavelength} hold true, these matrices can be stored and applied with complexity $\mathcal{O}(N \log(N))$. If \eqref{ow:eq_assume_resolve_wavelength} is replaced by the stronger assumption
\begin{equation} \label{ow:eq_assume_overresolve_wavelength}
	\kappa q_0 \leq c\sqrt[3]{N/\log(N)},
\end{equation}
then the complexity is reduced to $\mathcal{O}(N)$.
\end{theorem}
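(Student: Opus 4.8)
\medskip
\noindent\emph{Proof plan.} The plan is to identify $N_{\mathrm{C}}$ with the number of admissible leaves of the block tree. In the multiplication phase of Alg.~\ref{ow:alg_fdmvm} exactly one coupling matrix $A_{c,t\times s}$, the one for $c=\dir_{(\ell)}(t,s)$ with $\ell=\level(t)$, is applied per block $(t,s)\in\mathcal{L}_{T\times S}^+$, so by Rem.~\ref{ow:rem:one} it suffices to count these blocks (and to multiply by the per-matrix size $(m+1)^6$ for the complexity claims). Writing $N_{\mathrm{C}}^{(\ell)}$ for the number of admissible leaves $(t,s)$ with $\level(t)=\ell$, I would first observe that $N_{\mathrm{C}}^{(\ell)}=0$ for $\ell>p_{\min}$: a block at level $\ell\ge1$ exists only because its parent $(\parent(t),\parent(s))$ was refined in Alg.~\ref{ow:alg_block_ct}, which forces $\parent(t)$ and $\parent(s)$ to be non-leaves, hence both trees reach level $\ell$.

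The core estimate bounds $N_{\mathrm{C}}^{(\ell)}$ for $1\le\ell\le p_{\min}$ by counting, for each fixed $t\in\mathcal{T}_T^{(\ell)}$, the $s\in\mathcal{T}_S^{(\ell)}$ with $(t,s)\in\mathcal{L}_{T\times S}^+$. With $t^+=\parent(t)$, $s^+=\parent(s)$, such an $s$ forces $(t^+,s^+)$ to have been refined, i.e.\ to violate \eqref{ow:eq_adm_well_sep} or \eqref{ow:eq_adm_cone_dist}; using $\max\{\diam{t^+},\diam{s^+}\}\le q_{\ell-1}$, this yields in either case
\[
	\dist{t^+}{s^+}<\frac{1}{\eta_2}\max\{\diam{t^+},\diam{s^+}\}\,\max\{1,\kappa\max\{\diam{t^+},\diam{s^+}\}\}\le\frac{q_{\ell-1}}{\eta_2}\,\max\{1,\kappa q_{\ell-1}\}=:R_\ell .
\]
Since $t\subset t^+$, $s\subset s^+$ and $q_{\ell-1}=2q_\ell$ (diameters halve with each level, so $q_\ell=q_0 2^{-\ell}$), every such $s$ lies in a ball around $m_t$ of radius $\mathcal{O}(R_\ell+q_\ell)$. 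The level-$\ell$ boxes of $\mathcal{T}_S$ are pairwise disjoint and, by the uniformity of $\mathcal{T}_S$ and assumption \eqref{ow:eq_assume_comparable_trees}, have diameter between $q_\ell/c_{\mathrm{un}}$ and $q_\ell$ and volume at least $c_1 q_\ell^3$ for some $c_1=c_1(c_{\mathrm{un}})>0$; a volume-packing argument then bounds the number of admissible partners of $t$ by $c_2(1+\kappa q_\ell)^3$ with $c_2=c_2(\eta_2,c_{\mathrm{un}})$. Multiplying by $\#\mathcal{T}_T^{(\ell)}\le8^\ell$ and using $(1+\kappa q_\ell)^3\le4\bigl(1+(\kappa q_0)^3 8^{-\ell}\bigr)$ gives $N_{\mathrm{C}}^{(\ell)}\le4c_2\bigl(8^\ell+(\kappa q_0)^3\bigr)$; summing the geometric series over $\ell=1,\dots,p_{\min}$ and adding the trivial bound $N_{\mathrm{C}}^{(0)}\le1$ yields $N_{\mathrm{C}}\le c_{\mathrm{C}}\bigl(p_{\min}(q_0\kappa)^3+8^{p_{\min}}\bigr)$, i.e.\ \eqref{ow:eq_est_nr_coupling_mat}.

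For the complexity statements I would invoke Rem.~\ref{ow:rem:one} once more: each coupling matrix has $(m+1)^6=\mathcal{O}(1)$ entries, all computable in $\mathcal{O}(1)$, and is applied once, so both the storage and the runtime contributed by the M2L step are $\mathcal{O}(N_{\mathrm{C}})$. Under \eqref{ow:eq_assume_tree_uniformity} one has $8^{p_{\min}}\le8^{p_{\max}}\le8^{c_{\mathrm{ad}}}N=\mathcal{O}(N)$ and $p_{\min}\le p_{\max}=\mathcal{O}(\log N)$, and under \eqref{ow:eq_assume_resolve_wavelength} one has $(q_0\kappa)^3=\mathcal{O}(N)$; hence $p_{\min}(q_0\kappa)^3=\mathcal{O}(N\log N)$ and the total is $\mathcal{O}(N\log N)$. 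Replacing \eqref{ow:eq_assume_resolve_wavelength} by the stronger \eqref{ow:eq_assume_overresolve_wavelength} gives $(q_0\kappa)^3=\mathcal{O}(N/\log N)$, so $p_{\min}(q_0\kappa)^3=\mathcal{O}(N)$ and the complexity drops to $\mathcal{O}(N)$.

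I expect the main obstacle to be the per-box partner count: first converting the failed admissibility of the parent block $(t^+,s^+)$ into the level-uniform distance bound $R_\ell$, and then carrying out the volume-packing estimate for boxes that need not be cubes and that belong to two different trees. Assumption \eqref{ow:eq_assume_comparable_trees} is exactly what is needed there — to control the diameter (and, with the tacit shape-regularity of the root boxes, the volume) of the level-$\ell$ boxes of $\mathcal{T}_S$ in terms of $q_\ell$ — so that the resulting $c_{\mathrm{C}}$ depends only on $\eta_2$ and $c_{\mathrm{un}}$, and not on $\ell$, $\kappa$, or $N$.
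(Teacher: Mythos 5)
Your proposal is correct and follows essentially the same route as the paper's proof: count admissible leaf blocks level by level, convert the violated admissibility of the parent block into a distance bound, cover the candidate partners of each box $t$ by a ball and use a volume-packing argument against the disjoint level-$\ell$ boxes of $\mathcal{T}_S$, then sum the resulting geometric series. The only (cosmetic) differences are that you merge the high- and low-frequency regimes via the uniform factor $\max\{1,\kappa q_{\ell-1}\}$ and the inequality $(1+x)^3\le 4(1+x^3)$ instead of the paper's explicit case distinction at the threshold level $\tilde{\ell}$, and you even flag the tacit shape-regularity of the boxes that the paper uses silently when setting $v_\ell(\mathcal{T}_S)=3^{-3/2}q_\ell(\mathcal{T}_S)^3$.
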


\begin{proof} In this proof we pursue similar ideas as in~\cite[cf.~proof of Lem.~8]{ow:boerm2}.
We assume that the depth of $\mathcal{T}_{T \times S}$ is not zero, because \new{otherwise} $N_{\mathrm{C}} \leq 1$ and the assertion is trivial. Our strategy is to estimate the numbers $N_{\mathrm{C}}^{(\ell)}$ of coupling matrices at all relevant levels $\ell = 1,\ldots, p_{\min}$. 

In line \ref{ow:alg_line_m2l_loop} of Alg.~\ref{ow:alg_fdmvm} we see that the number of coupling matrices needed for a box $t \in \mathcal{T}_T^{(\ell)}$ is given by $\#\{s: (t,s) \in \mathcal{L}_{T \times S}^+ \}$. For such blocks $(t,s) \in \mathcal{L}_{T \times S}^+$ the  $\parent(s)$ is in the nearfield $\mathcal{N}(t_\mathrm{p})$ of $t_\mathrm{p} := \parent(t)$  by construction of the block tree in Alg.~\ref{ow:alg_block_ct}, where 
\begin{equation*}
 \quad \mathcal{N}(t_\mathrm{p}) := \{s_\mathrm{p} \in \mathcal{T}_S^{(\ell-1)}: s_\mathrm{p} \text{ and } t_\mathrm{p} \text{ violate \eqref{ow:eq_adm_well_sep} or \eqref{ow:eq_adm_cone_dist}} \}.
\end{equation*}
Using this property and the uniformity of the box cluster trees we can estimate
\begin{equation} \label{ow:eq_est_n_c_ell}
	N_{\mathrm{C}}^{(\ell)} 
	= \sum_{t \in \mathcal{T}_T^{(\ell)}} \sum_{(t,s) \in \mathcal{L}_{S \times T}^+} 1 
	\leq \sum_{t \in \mathcal{T}_T^{(\ell)}} 8 \cdot \# \mathcal{N}(\parent(t)) \leq 8^{\ell+1}
	N_{\mathcal{N},T}^{(\ell-1)},
\end{equation}
where $N_{\mathcal{N},T}^{(\ell-1)}$ is an upper bound for the number of boxes in the nearfield of a box at level $\ell-1$ in $\mathcal{T}_T$ which we estimate in the following.

We cover the nearfield~$\mathcal{N}(t)$ of a fixed box $t \in \mathcal{T}_T^{(\ell)}$ by a ball $B_{r_\ell}(m_t)$ with radius $r_\ell$ and center $m_t$ and take the ratio of the volume of the ball and the one of a box to estimate $N_{\mathcal{N},T}^{(\ell)}$ for $\ell \geq 1$. We have to distinguish the cases of the two admissibility criteria \eqref{ow:eq_adm_well_sep} and \eqref{ow:eq_adm_cone_dist}. For this purpose, let $\tilde{\ell}$ be such that $\kappa q_j > 1$, if and only if $j \leq \tilde{\ell}$. Such an $\tilde{\ell}$ exists since $q_j$ decreases monotonically for increasing level~$j$. In particular, we set $\tilde{\ell}=-1$, if $\kappa q_j \leq 1$ for all $j \geq 0$. If $j \leq \tilde{\ell}$ criterion~\eqref{ow:eq_adm_cone_dist} implies \eqref{ow:eq_adm_well_sep} as mentioned in Sect.~\ref{ow:sec_dir_approx}. Vice versa, \eqref{ow:eq_adm_well_sep} implies \eqref{ow:eq_adm_cone_dist} if~$j > \tilde{\ell}$.

Let us first assume that $\ell \leq \tilde{\ell}$ and consider an arbitrary box $s \in \mathcal{N}(t)$. Then~$t$ and $s$ violate \eqref{ow:eq_adm_cone_dist}, which means that $\eta_2 \dist{t}{s} < \kappa q_\ell^2$, i.e.~there exist $x \in \bar{t}$ and $y \in \bar{s}$ such that $|x-y| < \kappa q_\ell^2 / \eta_2$. Hence, we can estimate
\begin{equation} \label{ow:eq_est_r_ell}
\begin{split}
	\max_{z \in \bar{s}}|z-m_t| &\leq \max_{z \in \bar{s}}\left(|z-y|+|x-y|+|x-m_t|\right) \\
	&\leq 	q_\ell + \frac{\kappa q_\ell^2}{\eta_2} + \frac{q_\ell}{2} 
	\leq \left(\frac{3}{2} + \frac{1}{\eta_2}\right) \kappa q_\ell^2 =: r_\ell,
\end{split}
\end{equation}
where we used $\kappa q_\ell > 1$ for the last estimate. Therefore, every box $s \in \mathcal{N}(t)$ is contained in the ball $B_{r_\ell}(m_t)$ with $r_\ell$ from \eqref{ow:eq_est_r_ell}. If instead $\ell > \tilde{\ell}$ we analogously show
\begin{equation} \label{ow:eq_est_r_ell_lf}
	\mathcal{N}(t) \subset B_{r_\ell}(m_t), \quad 
	r_\ell = \left(\frac{3}{2} + \frac{1}{\eta_2}\right) q_\ell.
\end{equation}
With the ball $B_{r_\ell}(m_t)$ covering $\mathcal{N}(t)$ we can estimate
\begin{equation} \label{ow:eq_est_nearfield}
	\# \mathcal{N}(t) \leq \frac{|B_{r_{\ell}}(m_t)|}{v_{\ell}(\mathcal{T}_S)} 
	= \frac{(4 \pi/ 3) r_{\ell}^3}{3^{-3/2}q_{\ell}(\mathcal{T}_S)^3} 
	= 4 \pi \sqrt{3} \left(\frac{r_{\ell}}{q_{\ell}(\mathcal{T}_S)}\right)^3,
\end{equation}
where $v_{\ell}(\mathcal{T}_S) = 3^{-3/2} q_{\ell}(\mathcal{T}_S)^3$ denotes the volume of boxes $s \in \mathcal{T}_S^{(\ell)}$. Since $t \in \mathcal{T}_T^{(\ell)}$ was arbitrary, the bound in \eqref{ow:eq_est_nearfield} holds also for $N_{\mathcal{N},T}^{(\ell)}$ instead of $\mathcal{N}(t)$.

Summarizing \eqref{ow:eq_est_n_c_ell} and above findings, we get for the number $N_\mathrm{C}$ of all coupling matrices the estimate
\begin{align} 
	N_\mathrm{C} &= \sum_{\ell = 1}^{p_{\min}} N_{\mathrm{C}}^{(\ell)} \leq 
	\sum_{\ell = 1}^{p_{\min}} 8^{\ell+1} N_{\mathcal{N},T}^{(\ell-1)} \leq 
	\sum_{\ell = 1}^{p_{\min}} 8^{\ell+1} 4 \pi \sqrt{3} 
		\left(\frac{r_{\ell - 1}}{q_{\ell-1}(\mathcal{T}_S)}\right)^3\nonumber \\
	&\leq 4 \pi \sqrt{3} \left(\frac{3}{2} + \frac{1}{\eta_2}\right)^3 
	\left( \sum_{\ell = 1}^{\tilde{\ell}+1}  8^{\ell+1} (\kappa c_{\text{un}} q_\ell)^3 + 
	\sum_{\ell = \tilde{\ell} + 2}^{p_{\min}} 8^{\ell+1} c_{\text{un}}^3 \right) \nonumber\\
	&\leq c_\mathrm{C} \left(\sum_{\ell=1}^{\tilde{\ell}+1} (\kappa q_0)^3 + 8^{p_{\min}} \right)
	\leq c_\mathrm{C} (p_{\min} (\kappa q_0)^3 + 8^{p_{\min}}),\label{ow:eq_est_n_coupling}
\end{align}
where we assumed that $1 \leq \tilde{\ell} + 1 < p_{\min}$ and used assumption~\eqref{ow:eq_assume_comparable_trees} and the relation ${q_0 = 2^\ell q_\ell}$. If either $\tilde{\ell} + 1 \geq p_{\min}$ or $\tilde{\ell} = -1$, one can repeat the estimates in \eqref{ow:eq_est_n_coupling} and ends up with a similar result where one can cancel $8^{p_{\min}}$ in the first case and $p_{\min}(\kappa q_0)^3$ in the second case. The assertions about the complexity follow directly from \eqref{ow:eq_est_n_coupling} with assumptions \eqref{ow:eq_assume_tree_uniformity} and \eqref{ow:eq_assume_resolve_wavelength} or \eqref{ow:eq_assume_overresolve_wavelength}, respectively, since every coupling matrix~\eqref{ow:eq_def_coupling_matrix} has $(m+1)^6 = \mathcal{O}(1)$ entries. \qed
\end{proof}

In Thm.~\ref{ow:thm_nr_coupling_mat} we have estimated the number $N_\mathrm{C}$ of all coupling matrices, which corresponds to the number of admissible blocks $\mathcal{L}_{T \times S}^+$. As explained in Sect.~\ref{ow:sec_implementation_details}, we store reoccuring matrices only once to reduce the related storage costs drastically as we will see in the next theorem and in Sect.~\ref{ow:sec:num:examples}. Since one needs to know all blocks in $\mathcal{L}_{T \times S}^+$ in Alg.~\ref{ow:alg_fdmvm} and storing them has complexity $\mathcal{O}(N_\mathrm{C})$, storing each matrix only once does not reduce the overall storage complexity of the method asymptotically. 

\begin{theorem} \label{ow:thm_stored_coupling_mat}
Let the root boxes $T$ and $S$ of $\mathcal{T}_T$ and $\mathcal{T}_S$ be identical up to translation. Then the number $N_{\mathrm{SC}}$ of coupling matrices $A_{c,t \times s}$ which have to be stored can be estimated by
\begin{equation} \label{ow:eq_estimate_max_coup_mat_stored}
	N_{\mathrm{SC}}\leq p_{\min} \max\{ c_{\mathrm{C}},
	c_{\mathrm{C}}^{2/3}(\kappa q_0)^{2}\}.
\end{equation}
If \eqref{ow:eq_assume_tree_uniformity} and \eqref{ow:eq_assume_resolve_wavelength} hold, the corresponding storage complexity is $\mathcal{O}(N^{2/3} \log(N))$.
\end{theorem}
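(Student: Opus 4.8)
The plan is to bound, for each level $\ell = 0, \ldots, p_{\min}$, the number $N_{\mathrm{SC}}^{(\ell)}$ of \emph{pairwise distinct} coupling matrices occurring at that level, and then to sum over $\ell$. The starting observation is the one already made in Sect.~\ref{ow:sec_implementation_details}: for an admissible block $(t,s) \in \mathcal{L}_{T\times S}^+$ at level $\ell$ the coupling matrix $A_{c, t\times s}$, including the direction $c = c_{(\ell)}(t,s)$, is determined solely by the displacement $v := m_t - m_s$ of the two box midpoints. Since the root boxes $T$ and $S$ are identical up to translation, the midpoints of the boxes of $\mathcal{T}_T^{(\ell)}$ and of $\mathcal{T}_S^{(\ell)}$ lie on two cubic grids with the \emph{same} edge length $q_\ell/\sqrt 3$ that are translates of one another; hence all occurring displacements lie in a single coset of the lattice $\Lambda_\ell := (q_\ell/\sqrt 3)\,\mathbb{Z}^3$, and $N_{\mathrm{SC}}^{(\ell)}$ is bounded by the number of points of this coset that actually arise as such a displacement.

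Next I would localize these displacements exactly as in the proof of Thm.~\ref{ow:thm_nr_coupling_mat}. By construction of the block tree, every non-root block is created only if its parent violates \eqref{ow:eq_adm_well_sep} or \eqref{ow:eq_adm_cone_dist}; thus for $(t,s) \in \mathcal{L}_{T\times S}^+$ at a level $\ell \ge 1$ one has $\parent(s) \in \mathcal{N}(\parent(t)) \subset B_{r_{\ell-1}}(m_{\parent(t)})$ with $r_{\ell-1}$ as in \eqref{ow:eq_est_r_ell} resp.\ \eqref{ow:eq_est_r_ell_lf}, and adding the distance $q_\ell/2$ between a level-$\ell$ midpoint and its parent's midpoint yields $|v| \le r_{\ell-1} + q_\ell$. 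With $\tilde\ell$ as in that proof, on the levels with $\ell - 1 > \tilde\ell$ we have $r_{\ell-1} = (3/2 + 1/\eta_2)\,q_{\ell-1} = \mathcal{O}(q_\ell)$, so the ball $B_{r_{\ell-1}+q_\ell}(0)$ contains only $\mathcal{O}(1)$ points of $\Lambda_\ell$; hence $N_{\mathrm{SC}}^{(\ell)}$ is bounded there by a constant depending only on $\eta_2$, which is absorbed into $c_{\mathrm{C}}$. At level $\ell = 0$ there is at most one block.

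The heart of the argument is the high-frequency range $\ell - 1 \le \tilde\ell$, where $r_{\ell-1} = (3/2 + 1/\eta_2)\,\kappa q_{\ell-1}^2 = 4(3/2 + 1/\eta_2)\,\kappa q_\ell^2$. Here I would combine \emph{two} volume-to-covolume bounds for $N_{\mathrm{SC}}^{(\ell)}$. On the one hand, counting the points of $\Lambda_\ell$ inside $B_{r_{\ell-1}+q_\ell}(0)$ — the same computation that produces \eqref{ow:eq_est_nearfield} — gives $N_{\mathrm{SC}}^{(\ell)} \le c_1\,(\kappa q_\ell)^3 = c_1\,(\kappa q_0)^3\,8^{-\ell}$, using $q_\ell = 2^{-\ell}q_0$. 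On the other hand, every occurring displacement $v = m_t - m_s$ with $t \in \mathcal{T}_T^{(\ell)}$ and $s \in \mathcal{T}_S^{(\ell)}$ lies in the set $T - S$, which is a box of side length $2\,\mathrm{side}(T) = \mathcal{O}(q_0)$, so counting the points of $\Lambda_\ell$ in it gives $N_{\mathrm{SC}}^{(\ell)} \le c_2\,8^{\ell}$. The elementary inequality $\min\{A,B\} \le A^{2/3}B^{1/3}$, applied with $A = c_1\,(\kappa q_0)^3\,8^{-\ell}$ and $B = c_2\,8^{\ell}$, then yields $N_{\mathrm{SC}}^{(\ell)} \le c_1^{2/3}c_2^{1/3}\,(\kappa q_0)^2\,8^{-\ell/3} \le c_1^{2/3}c_2^{1/3}\,(\kappa q_0)^2$, uniformly in $\ell$. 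Summing the contributions over all levels and collecting the low- and high-frequency constants into the single constant $c_{\mathrm{C}}$ of Thm.~\ref{ow:thm_nr_coupling_mat} — which can be arranged so that the high-frequency factor reads $c_{\mathrm{C}}^{2/3}$ — gives \eqref{ow:eq_estimate_max_coup_mat_stored}. The complexity statement then follows at once, since each coupling matrix has $(m+1)^6 = \mathcal{O}(1)$ entries and, by \eqref{ow:eq_assume_tree_uniformity} and \eqref{ow:eq_assume_resolve_wavelength}, $p_{\min}(\kappa q_0)^2 \le (\log_8 N + c_{\mathrm{ad}})\,c_{\mathrm{geo}}^2\,N^{2/3} = \mathcal{O}(N^{2/3}\log N)$.

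The step I expect to be the main obstacle is precisely this last combination. Using only the "lattice points in the parent nearfield ball" bound gives $\sum_\ell \mathcal{O}\big((\kappa q_0)^3\,8^{-\ell}\big) = \mathcal{O}((\kappa q_0)^3)$, which is one power of $\kappa q_0$ too large; the improvement to $(\kappa q_0)^2$ comes exactly from playing this bound off against the crude geometric bound $\mathcal{O}(8^\ell)$ furnished by the finite extent of $T - S$. One also has to make sure that the lattice-alignment step in the first paragraph genuinely exploits that $T$ and $S$ are translates of one another, and not merely of comparable size as in \eqref{ow:eq_assume_comparable_trees} — this is exactly the extra hypothesis of the theorem.
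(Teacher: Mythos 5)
Your proposal is correct and is, in structure, the paper's own proof: both arguments reduce the count at level $\ell$ to a single translation class (your ``displacements $m_t-m_s$ lie in one coset of a lattice $\Lambda_\ell$'' is exactly the paper's observation that for different $t'\in\mathcal{T}_T^{(\ell)}$ the admissible partners $s'$ are translates of one another), bound that class via the nearfield ball of radius $r_{\ell-1}$ from the proof of Thm.~\ref{ow:thm_nr_coupling_mat} to get $c\,(\kappa q_0)^3 8^{-\ell}$ on high-frequency levels, cap it by a crude counting bound, balance the two, and sum over $\ell\le p_{\min}$. The one genuine difference is the cap: the paper uses the total number $8^{2\ell}$ of blocks at level $\ell$ and locates the intersection with $c_{\mathrm{C}}(\kappa q_0)^3 8^{-\ell}$, which is where the exponent $2/3$ in $c_{\mathrm{C}}^{2/3}(\kappa q_0)^2$ comes from, whereas you count the $\mathcal{O}(8^{\ell})$ lattice points of $\Lambda_\ell$ in the bounded set $T-S$. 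Your cap is strictly sharper, and you then give the gain away by choosing the split $\min\{A,B\}\le A^{2/3}B^{1/3}$ purely to reproduce the stated exponent: with your two ingredients the per-level bound is $\min\{c_1(\kappa q_0)^3 8^{-\ell},\,c_2 8^{\ell}\}$, whose sum over $\ell$ is dominated by its peak $\mathcal{O}\bigl((\kappa q_0)^{3/2}\bigr)$, i.e.\ $\mathcal{O}(N^{1/2})$ stored matrices under \eqref{ow:eq_assume_resolve_wavelength} --- strictly better than the theorem's $\mathcal{O}(N^{2/3}\log(N))$; even your own intermediate bound $c\,(\kappa q_0)^2 8^{-\ell/3}$ sums to a geometric series and does not need the factor $p_{\min}$. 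So the proof is valid as written (the only cosmetic caveat being that matching the literal constant $c_{\mathrm{C}}^{2/3}$ of \eqref{ow:eq_estimate_max_coup_mat_stored} requires the paper's $8^{2\ell}$ cap rather than yours), and it in fact establishes a slightly stronger statement than the one claimed.
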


\begin{proof}
From the proof of Thm.~\ref{ow:thm_nr_coupling_mat}, in particular \eqref{ow:eq_est_r_ell}, \eqref{ow:eq_est_r_ell_lf} and \eqref{ow:eq_est_nearfield}, it follows that the number of admissible blocks $(t,s) \in \mathcal{L}_{T \times S}^+$ for a fixed box $t \in \mathcal{T}_T^{\ell}$  can be estimated by
\begin{equation} \label{ow:eq_est_nr_adm_blocks_gen}
\begin{split}
	8 \cdot \#\mathcal{N}(\parent(t)) &\leq 32 \pi \sqrt{3} c_\mathrm{un}^3 \left( \frac{3}{2} + \frac{1}{\eta_2}\right)^3 \max\{1, (\kappa q_0)^3 8^{1-\ell}\} \\
	&\leq c_{\mathrm{C}} \max\{1, (\kappa q_0)^3 8^{-\ell}\},
\end{split}
\end{equation}
where we used $q_\ell = 2^{-\ell} q_0$, and $c_{\mathrm{C}}$ is the same constant as in \eqref{ow:eq_est_nr_coupling_mat}. For a different box $t' \in \mathcal{T}_T^{\ell}$ the boxes $s'$ such that $(t',s') \in \mathcal{L}_{T \times S}^+$ are identical to blocks $(t,s)$ up to translation, which follows from the assumption on the root boxes $T$ and $S$ and the uniformity of the trees~$\mathcal{T}_T$ and $\mathcal{T}_S$. Hence, the coupling matrices coincide and~\eqref{ow:eq_est_nr_adm_blocks_gen} is a bound for the number $N_{\mathrm{SC}}^{(\ell)}$ of stored coupling matrices at level $\ell$. On the other hand, there are at most $8^{2 \ell}$ blocks at level $\ell$ of $\mathcal{T}_{T \times S}^+$, which gives
\begin{equation*}
	N_{\mathrm{SC}}^{(\ell)} \leq 
	\min\{8^{2\ell}, c_{\mathrm{C}} \max\{1, (\kappa q_0)^3 8^{-\ell} \} \}.
\end{equation*}
The maximum over all $\ell$ of the expression on the right-hand side is bounded by~$8^{2 \ell^*}$, where $\ell^*$ is the intersection point of $8^{2 \ell}$ and $c_{\mathrm{C}} \max\{1, (\kappa q_0)^3 8^{-\ell} \}$. By computing this maximum we end up with the general bound
\begin{equation*}
	N_{\mathrm{SC}}^{(\ell)} \leq 
	\max\left\{ c_\mathrm{C}, c_{\mathrm{C}}^{2/3} (\kappa q_0)^2 \right\} \quad \text{for all }\ell \geq 0.
\end{equation*}
Summation over all levels $\ell = 1, \ldots, p_{\min}$ yields \eqref{ow:eq_estimate_max_coup_mat_stored}. Since every coupling matrix has $(m+1)^6$ entries, it follows that all distinct coupling matrices can be stored with $\mathcal{O}(N^{2/3}\log(N))$ memory units, if assumptions \eqref{ow:eq_assume_tree_uniformity} and~\eqref{ow:eq_assume_resolve_wavelength} hold. \qed
\end{proof}

In Thm.~\ref{ow:thm_est_nearfield_entries}, we will perform the complexity analysis of the nearfield evaluation, i.e.~lines \ref{ow:alg_line_start_nearfield} and \ref{ow:alg_line_end_nearfield} of Alg.~\ref{ow:alg_fdmvm}.
In unbalanced trees there can be leaf clusters at coarse levels with large nearfields. If there were many of these, the complexity would not be linear. To exclude exceptional settings we make the additional assumption that the number of such leaf clusters is bounded, i.e., there exists a constant $c_{\mathrm{in}} \in \mathbb{N}$ such that
\begin{equation} \label{ow:eq_assume_few_uncontrollable_leaves}
  \# \mathcal{L}_T^-\leq c_\mathrm{in}, \quad \# \mathcal{L}_S^- \leq c_\mathrm{in},
\end{equation}
where $\mathcal{L}_S^-:= \mathcal{L}_S \setminus \mathcal{L}_S^+$ and
\begin{equation} \label{ow:eq_def_nice_leaves}
\begin{split}
	\mathcal{L}_{S}^+ := 
	\{s \in \mathcal{L}_S \colon &\# \{t: (t,s) \in \mathcal{L}_{T\times S}^-\} \leq c_\mathrm{nf} \text{ and } \\ 
	&\#\hat{t} \leq c_{\mathrm{m}} n_{\max} \text{ for all } (t,s) \in \mathcal{L}_{T \times S}^- \},
\end{split}
\end{equation}
for some fixed parameters $c_\mathrm{m}$ and $c_\mathrm{nf}$.
It follows from~\eqref{ow:eq_est_nr_adm_blocks_gen} that for a leaf box $s \in \mathcal{T}_S^{(\ell)}$ the assumption $\# \{t: (t,s) \in \mathcal{L}_{T\times S}^-\} \leq c_\mathrm{nf}$ holds true for sufficiently large constant $c_\mathrm{nf}$ if $\ell = \level(t)$ is large enough.

\begin{theorem} \label{ow:thm_est_nearfield_entries}
Let assumptions \eqref{ow:eq_assume_comparable_trees} and \eqref{ow:eq_assume_few_uncontrollable_leaves} hold true.
Then there exists a constant~$c_\mathrm{D}$ depending only on $c_{\mathrm{un}}$, $c_\mathrm{in}$, $c_\mathrm{m}$, $c_\mathrm{nf}$, $n_{\max}$ and $\eta_2$ such that the number~$M_\mathrm{D}$ of entries of all nearfield blocks $A|_{\hat{t} \times \hat{s}}$ in Alg.~\ref{ow:alg_fdmvm} is bounded by
\begin{equation} \label{ow:eq_est_nearfield_mat_entries}
	M_\mathrm{D} \leq c_\mathrm{D} (N_T + N_S + 8^{p_{\min}}).
\end{equation}
If \eqref{ow:eq_assume_tree_uniformity} holds, the corresponding storage complexity is $\mathcal{O}(N)$.
\end{theorem}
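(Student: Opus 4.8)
The plan is to bound $M_\mathrm{D}=\sum_{(t,s)\in\mathcal{L}_{T\times S}^-}\#\hat t\cdot\#\hat s$ by splitting the sum according to which cluster of an inadmissible leaf is a leaf box of its tree and whether that leaf box is ``controllable'' in the sense of \eqref{ow:eq_def_nice_leaves}. The argument relies on a handful of elementary facts about the uniform box cluster trees. First, every $(t,s)\in\mathcal{L}_{T\times S}^-$ satisfies $t\in\mathcal{L}_T$ or $s\in\mathcal{L}_S$, since otherwise the block would have been refined in Alg.~\ref{ow:alg_block_ct}, and every block satisfies $\level(t)=\level(s)\le p_{\min}$, since both components are refined simultaneously from the roots. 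Second, the leaf index sets partition $\{1,\dots,N_T\}$ and $\{1,\dots,N_S\}$, so $\sum_{t\in\mathcal{L}_T}\#\hat t=N_T$ (analogously for $S$); every leaf box holds at most $n_{\max}$ points, so every non-leaf box holds more; and the boxes at a fixed level of $\mathcal{T}_T$ (or $\mathcal{T}_S$) are pairwise disjoint, hence the index sets of any collection of boxes at a common level sum to at most $N_T$ (or $N_S$).

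Let $\mathcal{L}_T^+$ be the analogue of $\mathcal{L}_S^+$ from \eqref{ow:eq_def_nice_leaves} with $\mathcal{T}_T$ and $\mathcal{T}_S$ interchanged, and $\mathcal{L}_T^-:=\mathcal{L}_T\setminus\mathcal{L}_T^+$. By the structural observation, every $(t,s)\in\mathcal{L}_{T\times S}^-$ lies in at least one of the four classes $t\in\mathcal{L}_T^+$, $s\in\mathcal{L}_S^+$, $t\in\mathcal{L}_T^-$, $s\in\mathcal{L}_S^-$, so $M_\mathrm{D}\le S_a+S_b+S_c+S_d$, with $S_a,\dots,S_d$ the entry counts over these classes. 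For $S_b$ I would use the definition of $\mathcal{L}_S^+$ directly: a leaf $s\in\mathcal{L}_S^+$ has at most $c_\mathrm{nf}$ inadmissible partners $t$, each with $\#\hat t\le c_\mathrm{m}n_{\max}$, while $\#\hat s\le n_{\max}$; hence $s$ contributes at most $c_\mathrm{nf}c_\mathrm{m}n_{\max}\#\hat s$ entries, and summing over $\mathcal{L}_S^+$ with $\sum_{s\in\mathcal{L}_S}\#\hat s=N_S$ gives $S_b\le c_\mathrm{nf}c_\mathrm{m}n_{\max}N_S$; symmetrically $S_a\le c_\mathrm{nf}c_\mathrm{m}n_{\max}N_T$.

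The main obstacle is $S_d$ (and, by symmetry, $S_c$): a bad leaf $s\in\mathcal{L}_S^-$ may have a non-leaf partner $t$ with many points and may have many partners, so neither $\#\hat t$ nor the number of partners is bounded by a constant; this is exactly where assumption \eqref{ow:eq_assume_few_uncontrollable_leaves} is needed, limiting the number of bad leaves to $c_\mathrm{in}$. For a fixed $s\in\mathcal{L}_S^-$ at level $\ell\le p_{\min}$ I would split its inadmissible partners $t$ into ``small'' ones with $\#\hat t\le n_{\max}$ and ``large'' ones with $\#\hat t>n_{\max}$. The small partners are distinct boxes of $\mathcal{T}_T^{(\ell)}$, so there are at most $8^\ell\le 8^{p_{\min}}$ of them, each contributing at most $n_{\max}^2$ entries. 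The large partners are likewise distinct boxes of $\mathcal{T}_T^{(\ell)}$, so their index sets are pairwise disjoint, $\sum_t\#\hat t\le N_T$, and since $\#\hat s\le n_{\max}$ they contribute at most $n_{\max}N_T$ entries in total. Thus $s$ contributes at most $n_{\max}^2 8^{p_{\min}}+n_{\max}N_T$, and summing over the at most $c_\mathrm{in}$ bad leaves yields $S_d\le c_\mathrm{in}(n_{\max}^2 8^{p_{\min}}+n_{\max}N_T)$, with the symmetric bound for $S_c$. Adding the four estimates gives \eqref{ow:eq_est_nearfield_mat_entries} with $c_\mathrm{D}$ depending only on $n_{\max}$, $c_\mathrm{nf}$, $c_\mathrm{m}$ and $c_\mathrm{in}$, and the $\mathcal{O}(N)$ storage claim — and, via Rem.~\ref{ow:rem:one}, the runtime claim — follows since \eqref{ow:eq_assume_tree_uniformity} gives $8^{p_{\min}}\le 8^{p_{\max}}\le 8^{c_{\mathrm{ad}}}N$.
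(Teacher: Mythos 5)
Your proposal is correct and follows essentially the same route as the paper: split the inadmissible leaves according to whether the leaf component lies in $\mathcal{L}_S^{\pm}$ (resp.\ $\mathcal{L}_T^{\pm}$), bound the good leaves via \eqref{ow:eq_def_nice_leaves} and the bad ones via \eqref{ow:eq_assume_few_uncontrollable_leaves} together with disjointness of same-level boxes. The only cosmetic differences are that the paper bounds the good-leaf contribution by counting at most $8^{p_{\min}}$ leaves (giving $8^{p_{\min}}c_\mathrm{nf}c_\mathrm{m}n_{\max}^2$) where you use $\sum_s\#\hat s\le N_S$, and that your small/large split for the bad-leaf partners is unnecessary since the disjointness argument already yields $c_\mathrm{in}n_{\max}N_T$ directly.
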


\begin{proof}
Each nearfield matrix block corresponds to an inadmissible block $(t,s) \in \mathcal{L}_{T \times S}^-$. For such a block there holds $t \in \mathcal{L}_T$ or $s \in \mathcal{L}_S$ by construction. We start counting entries of blocks corresponding to leaves in $\mathcal{L}_S$ by considering the sets $\mathcal{L}_S^+$ and~$\mathcal{L}_S^-$.

For the number $M_{\mathrm{D},S}^-$ of nearfield matrix entries corresponding to blocks~$(t,s)$ with outlying leaves $s \in \mathcal{L}_S^-$ there holds
\begin{equation} \label{ow:eq_est_uncontrollable}
	M_{\mathrm{D},S}^- 
	= \sum_{s \in \mathcal{L}_{S}^-} \# \hat{s} 
		\sum_{\{t: (t,s) \in \mathcal{L}_{T \times S}^-\}} \# \hat{t}
	\leq c_{\mathrm{in}} n_{\max} N_T.
\end{equation}
Here we used that $\#\hat{s} \leq n_{\max}$ holds for all leaf boxes, and that the nearfield $\mathcal{N}(s) =\{t: (t,s) \in \mathcal{L}_{T \times S}^-\}$ of $s$ can contain at most all $N_T$ points in $P_T$.

Next we estimate the number $M_{\mathrm{D},S}^+$ of nearfield matrix entries corresponding to blocks $(t,s)$ with $s \in \mathcal{L}_S^+$. For  fixed $s \in \mathcal{L}_S^+$ there exist at most $c_\mathrm{nf}$ such blocks $(t,s)$ and the corresponding boxes $t$ contain maximally $c_\mathrm{m} n_{\max}$ points by definition of $\mathcal{L}_S^+$ in \eqref{ow:eq_def_nice_leaves}. Furthermore, the level of a box $s$ in an inadmissible block $(t,s)$ can be at most $p_{\min}$ and $\mathcal{T}_S$ can have at most $8^{p_{\min}}$ leaves at levels $\ell \leq p_{\min}$. Hence, we get
\begin{equation} \label{ow:eq_est_controllable}
	M_{\mathrm{D},S}^+ 
	= \sum_{s \in \mathcal{L}_S^+} \# \hat{s} \sum_{t \in \mathcal{N}(s)} \# \hat{t} 
	\leq 8^{p_{\min}} c_\mathrm{nf} c_\mathrm{m} n_{\max}^2.
\end{equation}

Analogous estimates as \eqref{ow:eq_est_uncontrollable} and \eqref{ow:eq_est_controllable} hold true for nearfield matrices corresponding to leaves in $\mathcal{L}_T$. Adding up  all these estimates leads to the bound in~\eqref{ow:eq_est_nearfield_mat_entries}, with the constant $c_{\mathrm{D}} = 2 n_{\max}\max\{c_\mathrm{in}, c_\mathrm{nf} c_\mathrm{m} n_{\max}\}$. If \eqref{ow:eq_assume_tree_uniformity} holds, the storage complexity $\mathcal{O}(N)$ is an immediate consequence of \eqref{ow:eq_est_nearfield_mat_entries}. \qed
\end{proof}
The following theorem summarizes the results of this section.
\begin{theorem}
Let assumptions \eqref{ow:eq_assume_sparse_leafs}--\eqref{ow:eq_assume_reasonable_lhf} and \eqref{ow:eq_assume_few_uncontrollable_leaves} hold true. Then the complexity of Alg.~\ref{ow:alg_fdmvm} is $\mathcal{O}(N \log(N))$. If \eqref{ow:eq_assume_resolve_wavelength} is replaced by \eqref{ow:eq_assume_overresolve_wavelength} the complexity is reduced to $\mathcal{O}(N)$.
\end{theorem}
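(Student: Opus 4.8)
The plan is to assemble the overall complexity bound from the four preceding theorems by way of the bookkeeping principle stated in Remark~\ref{ow:rem:one}: the runtime and storage complexity of Alg.~\ref{ow:alg_fdmvm} is proportional to the total number of entries of all applied directional interpolation matrices, transfer matrices, coupling matrices and nearfield matrices, plus the cost of enumerating the leaves of the block tree~$\mathcal{T}_{T\times S}$. First I would invoke Thm.~\ref{ow:th_nr_transfer_mat}, whose complexity statement uses exactly \eqref{ow:eq_assume_sparse_leafs}, \eqref{ow:eq_assume_tree_uniformity} and \eqref{ow:eq_assume_reasonable_lhf}, to bound the cost contributed by the transfer and directional interpolation matrices by $\mathcal{O}(N)$. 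Next I would invoke Thm.~\ref{ow:thm_est_nearfield_entries}, whose complexity statement uses \eqref{ow:eq_assume_comparable_trees}, \eqref{ow:eq_assume_few_uncontrollable_leaves} and \eqref{ow:eq_assume_tree_uniformity}, to bound the nearfield contribution by $\mathcal{O}(N)$; here one uses $N_T + N_S \leq 2N$ and $8^{p_{\min}} \leq 8^{p_{\max}} \leq 8^{c_{\mathrm{ad}}} N$ from \eqref{ow:eq_assume_tree_uniformity}.

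The dominant term is the one due to the coupling matrices. By Thm.~\ref{ow:thm_nr_coupling_mat}, which uses \eqref{ow:eq_assume_comparable_trees}, their number $N_{\mathrm{C}}$ is at most $c_{\mathrm{C}}(p_{\min}(\kappa q_0)^3 + 8^{p_{\min}})$, each coupling matrix has $(m+1)^6 = \mathcal{O}(1)$ entries, and, as noted in the paragraph after that theorem, enumerating $\mathcal{L}_{T\times S}^+$ has complexity $\mathcal{O}(N_{\mathrm{C}})$ as well, so the block-tree traversal does not introduce a new term. Combining $p_{\min} \leq p_{\max} \leq \log_8(N) + c_{\mathrm{ad}}$ and $8^{p_{\min}} \leq 8^{c_{\mathrm{ad}}} N$ from \eqref{ow:eq_assume_tree_uniformity} with $(\kappa q_0)^3 \leq c_{\mathrm{geo}}^3 N$ from \eqref{ow:eq_assume_resolve_wavelength} gives $N_{\mathrm{C}} = \mathcal{O}(N\log N)$. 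Adding the three contributions, the total is $\mathcal{O}(N\log N)$, which proves the first claim.

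For the second claim I would simply observe that replacing \eqref{ow:eq_assume_resolve_wavelength} by \eqref{ow:eq_assume_overresolve_wavelength} yields $(\kappa q_0)^3 \leq c^3 N/\log N$, hence $p_{\min}(\kappa q_0)^3 = \mathcal{O}(N)$, and by the corresponding $\mathcal{O}(N)$ statement of Thm.~\ref{ow:thm_nr_coupling_mat} the coupling-matrix cost drops to $\mathcal{O}(N)$. Since the transfer, interpolation and nearfield contributions are unaffected, the total complexity becomes $\mathcal{O}(N)$.

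There is no genuine obstacle in this argument; it is essentially an assembly of earlier results. The only point requiring care is the bookkeeping: one must check that the listed hypotheses \eqref{ow:eq_assume_sparse_leafs}--\eqref{ow:eq_assume_reasonable_lhf} together with \eqref{ow:eq_assume_few_uncontrollable_leaves} indeed cover every assumption invoked by the three complexity statements --- in particular that \eqref{ow:eq_assume_tree_uniformity} is common to all of them --- and that the cost of traversing the block tree is subsumed in $\mathcal{O}(N_{\mathrm{C}})$ rather than appearing as an extra term.
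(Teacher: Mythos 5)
Your proposal is correct and follows essentially the same route as the paper, which states this theorem as a summary whose proof is exactly the assembly, via Remark~\ref{ow:rem:one}, of the complexity statements of Thms.~\ref{ow:th_nr_transfer_mat}, \ref{ow:thm_nr_coupling_mat} and \ref{ow:thm_est_nearfield_entries} under the listed assumptions. Your bookkeeping of which hypothesis feeds which theorem, and the observation that enumerating $\mathcal{L}_{T\times S}^+$ costs only $\mathcal{O}(N_{\mathrm{C}})$, match the paper's intent.
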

\section{Numerical Examples}\label{ow:sec:num:examples}
In this section we want to test the method presented in Sect.~\ref{ow:sec_derivation} and to validate the theoretical results from Sect.~\ref{ow:sec_complexity_analysis}. For this purpose we use a single core implementation of Alg.~\ref{ow:alg_fdmvm} in C\texttt{++} on a computer with \new{384 GiB RAM} and 2 Intel Xeon Gold 5218 CPUs. To reduce the required memory we store only the non-directional parts $E_{t'}$ of the transfer matrices and each coupling matrix once, as described in Sect.~\ref{ow:sec_implementation_details}. However, if the matrix is applied several times it can be beneficial to store also the directional interpolation matrices $L_{t,c}$ and nearfield matrix blocks.

For the tests we consider points distributed uniformly inside the cube~$[-1,1]^3$. For various values $k \geq 3$ we choose $\tilde{x}_n = (2n-1) 2^{-k} -1$ in~$[-1,1]$ for all $n \in \{1, ..., 2^k\}$ and construct the set of points $P_T(k) = \{x_j\}_{j=1}^{N(k)}$ with ${N(k) = 8^k}$ as tensor products of these one-dimensional points. We choose $P_S(k)= P_T(k)$ and consider the matrix $A$ as in \eqref{ow:eq_helmholtz_matrix} with the wave number $\kappa = 0.1 \cdot 2^k$ and the diagonal set to zero to eliminate the singularities. The approximation derived in Sect.~\ref{ow:sec_derivation} is applicable despite the change of the diagonal because it effects only parts of the matrix which are evaluated directly. 

We construct a uniform box cluster tree $\mathcal{T}_T$ for the set $P_T$ using Alg.~\ref{ow:alg_box_ct} with the initial box $T=[-1,1]^3$ and the parameter $n_{\max} = 512$. With this choice of parameters and points, $\mathcal{T}_T$ is a uniform octree with depth $p(\mathcal{T}_T)=k-3$, where every leaf contains exactly 512 points. We construct the sets of directions~$D^{(\ell)}$  with Alg.~\ref{ow:alg_dir} and the largest high frequency level $\ell_{\mathrm{hf}} = k-4$ and finally we use Alg.~\ref{ow:alg_block_ct} to construct the block cluster tree $\mathcal{T}_{T \times T}$ with the parameter $\eta_2=5$. The parameters $\ell_\mathrm{hf}$ and $\eta_2$ were chosen according to the parameter choice rule in \cite[Sect.~3.1.4]{ow:watschinger}. In particular, the choice $\eta_2=5$ minimizes the number of inadmissible blocks $b \in \mathcal{L}^-_{T \times T}$ at levels $\ell > \ell_\mathrm{hf}$. Note that due to the uniformity of the tree $\mathcal{T}_T$ and the choice $P_S(k)=P_T(k)$ the block tree $\mathcal{T}_{T \times T}$ has depth $p(\mathcal{T}_{T})=\ell_\mathrm{hf} + 1$ and all inadmissible blocks are at level $\ell_\mathrm{hf}+1$. 

The assumptions \eqref{ow:eq_assume_sparse_leafs}--\eqref{ow:eq_assume_reasonable_lhf} are all satisfied for the considered examples for suitable constants $c_{\max}$, $c_\mathrm{un}$, $c_\mathrm{ad}$, $c_\mathrm{geo}$, and $c_\mathrm{hf}$ independent of the sets $P_T(k)$. Assumption \eqref{ow:eq_assume_few_uncontrollable_leaves} holds for $c_\mathrm{in}=0$, because all leafs in $\mathcal{L}_T$ are at level $k-3$ and by the choice of $\eta_2$ there holds $\#\{s:(t,s) \in \mathcal{L}_{T \times T}^-\} \leq 27$ for all leaves $t \in \mathcal{L}_T$.

\begin{table}[t]
\caption{Computation times and storage requirements for matrix-vector multiplications using Alg.~\ref{ow:alg_fdmvm} for the matrix $A$ corresponding to sets of points $P_T(k)$ for various values of $k$. Parameters: $\ell_\mathrm{hf}=k-4$, $\eta_2=5$, interpolation degree $m=4$.} \label{ow:tab_times_and_storage}
\centering
\begin{tabular}{l |  r  r  r  r  r  r  r  r  r  r}
$k$ & $N$ & $\kappa$ & $t_\mathrm{tot}$ & $t_\mathrm{s}$ & $t_\mathrm{nf}$ & $t_\mathrm{ff}$ & nf [\%] & $N_\mathrm{SC}$ & $N_\mathrm{C}$ & [GiB] \\
\hline
5 & 32768 & 3.2 & 7.31 & 0.34 & 6.94 & 0.03 & 24.41 & 316 & 3096 & 0.02\\
6 & 262144 & 6.4 & 76.25 & 1.20 & 74.29 & 0.76 & 4.06 & 1522 & 166320 & 0.10\\
7 & 2097152 & 12.8 & 702.72 & 3.71 & 688.14 & 10.87 & 0.58 & 4554 & 2640960 & \new{0.46} \\
8 & 16777216 & 25.6 & 6060.16 & 15.24 & 5907.66 & 137.26 & 0.077 & 9824 & 33103296 & \new{3.09} \\
9 & 134217728 & 51.2 & 50204.00 & 118.89 & 48576.20 & 1508.91 & 0.010 & 32036 & 344979432 & 24.2\\	
\end{tabular}
\end{table}
\tikzexternalenable
\begin{figure}[ht]
	\minipage{0.549\textwidth}
        \includegraphics{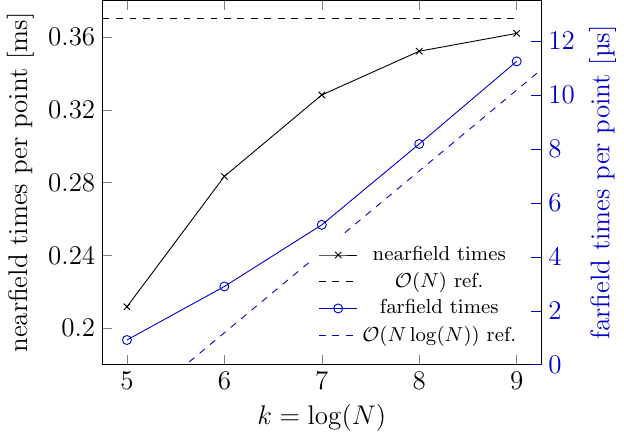}
	\endminipage\hfill
	\minipage{0.449\textwidth}
        \includegraphics{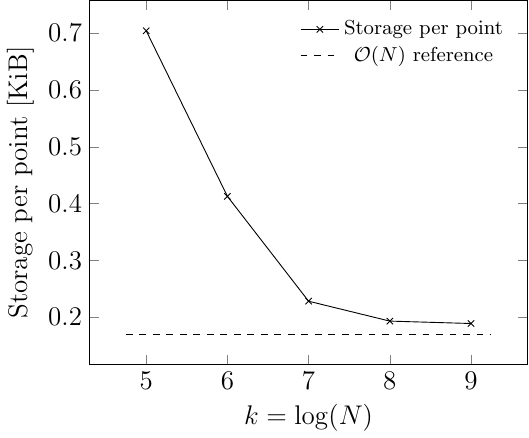}
	\endminipage\hfill
	\caption{ \new{ Plots related to the computations of Table~\ref{ow:tab_times_and_storage}. Left image: Nearfield and farfield computation times per point with linear and quasi-linear reference curves in the different scales. Right image: Required storage per point with linear reference curve.} } \label{ow:fig_times_and_storage}
      \end{figure}
       \tikzexternaldisable

In the described setting we apply Alg.~\ref{ow:alg_fdmvm} for the fast multiplication of the matrix $A$ with a randomly constructed vector $v$. The interpolation degree $m=4$ is chosen, since it is reasonably high to yield a good approximation quality (e.g.~relative error $2\cdot 10^{-4}$ for $k=6$) while it is low enough to make the approximations of all admissible blocks efficient.

The results of the computations for various sets of points $P_T(k)$ are given in Table~\ref{ow:tab_times_and_storage} and \new{Fig.}~\ref{ow:fig_times_and_storage}.  The total computational times~$t_{\mathrm{tot}}$ are split into setup times~$t_\mathrm{s}$,  times~$t_\mathrm{nf}$ of the nearfield part, and computational times~$t_\mathrm{ff}$ of the farfield part.
In addition, the percentage of matrix entries in inadmissible blocks~(nf), the numbers $N_{\mathrm{SC}}$ and $N_{\mathrm{C}}$ of stored and applied coupling matrices and the storage requirements ([GiB]) are given.
\new{A direct computation for $k=7$ takes more than 32 hours. Thus the directional approximation is about 160 times faster.
  For larger examples the difference would be even more pronounced due to the quadratic complexity of the direct computation.
  
In Fig.~\ref{ow:fig_times_and_storage}, we plot computational times and memory consumption per point. As expected from our theoretical results of Sect.~\ref{ow:sec_complexity_analysis}, we observe linear and almost linear behavior, respectively, for the nearfield and the farfield part of the computations, see the left plot in Fig.~\ref{ow:fig_times_and_storage}. As usual there is some preasymptotic behavior in such plots. The right plot in Fig.~\ref{ow:fig_times_and_storage} shows the linear behavior of the memory requirements. Note that we store coupling matrices and transfer matrices only. In particular, we mention the low number $N_{\mathrm{SC}}$ of stored coupling matrices compared to the total number~$N_{\mathrm{C}}$ of coupling matrices in Table~\ref{ow:tab_times_and_storage}.
}

\noindent \textbf{Acknowledgment.}
This work was partially supported by the Austrian Science Fund (FWF): I 4033-N32.

 
 \end{document}